\documentclass[bezier,amstex, oneside,reqno]{amsart}
\usepackage{amsmath, amssymb, amsthm, verbatim, euscript, amscd, textcomp}
\usepackage[dvips]{graphicx}
\usepackage{epsfig}
 \usepackage[all,cmtip]{xy}
\usepackage{color}
 \usepackage[fleqn]{cases}
%\input{psfig}
%\begingroup\makeatletter\ifx\SetFigFont\undefined%
%\gdef\SetFigFont#1#2#3#4#5{%
%  \reset@font\fontsize{#1}{#2pt}%
%  \fontfamily{#3}\fontseries{#4}\fontshape{#5}%
%  \selectfont}%
%\fi\endgroup%

%\usepackage{ccfonts,eulervm}
%\renewcommand{\bfdefault}{sbc}

\def\ie{\emph{i.e., }}
\def\eg{\emph{e.g., }}

\def\R{\mathbb R}
\def\Z{\mathbb Z}

\def\C{\mathbb C}
\def\T{\mathbb T}

\def\e{\varepsilon}

\def\TT{\mathcal T}
\def\Diff{\textup{Diff}\,}

\newtheorem*{problem}{Problem}
\newtheorem*{maintheorem}{Main Theorem}
\newtheorem{theorem}{Theorem}[section]

\newtheorem{add}[theorem]{Addendum}

\newtheorem{cor}[theorem]{Corollary}
\newtheorem{lemma}[theorem]{Lemma}
 \theoremstyle{remark}
\newtheorem{remark}[theorem]{Remark}

\theoremstyle{remark}

%Anosov automorphism, local rigidity, smooth conjugacy, hyperbolic dynamics, cohomological equation, foliation

\begin{document}
\author{Andrey Gogolev$^\ast$}
\title[Bootstrap for Anosov automorphisms]{Bootstrap for local rigidity of Anosov automorphisms on the 3-torus}
\thanks{$^\ast$The author was partially supported by NSF grant DMS-1266282.} 
\begin{abstract}
We establish a strong form of local rigidity for hyperbolic automorphisms of the 3-torus with real spectrum. Namely, let $L\colon\T^3\to\T^3$ be a hyperbolic automorphism of the 3-torus with real spectrum and let $f$ be a $C^1$ small perturbation of $L$. Then $f$ is smoothly ($C^\infty$) conjugate to $L$ if and only if obstructions to $C^1$ conjugacy given by the eigenvalues at periodic points of $f$ vanish. By combining our result and a local rigidity result of Kalinin and Sadovskaya for conformal automorphisms~\cite{KS3} this completes the local rigidity program for hyperbolic automorphisms in dimension 3. Our work extends de la Llave-Marco-Moriy\'on 2-dimensional local rigidity theory~\cite{MM, L0, L1}.
\end{abstract}
\date{}
 \maketitle

\section{Introduction}

An automorphism of $\R^d$ induced by a matrix $L\in GL(d,\Z)$ descends to an automorphism of the torus $\T^d=\R^d/\Z^d$, which we still denote by $L\colon \T^d\to \T^d$. An automorphism $L\colon \T^d\to\T^d$, $d\ge 2$, is called {\it hyperbolic} or {\it Anosov} if the eigenvalues of corresponding matrix $L\in GL(d,\Z)$ lie off the unit circle in $\C$. By Anosov's structural stability theorem~\cite{An} any sufficiently $C^1$-small perturbation of a hyperbolic automorphism $L$ is topologically conjugate to $L$. Some obvious obstructions for $C^1$ conjugacy (and a fortiori) for higher regularity conjugacy are carried by periodic points. 
Namely, given a hyperbolic automorphism $L\colon \T^d\to\T^d$ and sufficiently $C^1$-small smooth perturbation $f\colon\T^d\to\T^d$ we say that {\it periodic data obstructions vanish} for $f$ if for each $f$-periodic point $p$ the Jordan normal form of the differential $Df^n(p)$ is the same as the Jordan normal form of $L^n$, where $n$ is the period of $p$. A hyperbolic automorphism $L\colon\T^d\to \T^d$ is called {\it locally rigid} if any sufficiently $C^1$-small smooth perturbation of $L$, for which periodic data obstructions vanish, is conjugate to $L$ via a smooth diffeomorphism. (Throughout the paper by {\it ``smooth"} we mean ``$C^\infty$ differentiable.")

\begin{maintheorem}
\label{thm_main}
 Assume that  $L\colon\T^3\to\T^3$ is
 a hyperbolic automorphism with real spectrum.
Then $L$ is locally rigid.
\end{maintheorem}
To the best of our knowledge this is the first local rigidity result which yields smooth conjugacy in non-conformal setting. In fact, $C^{1+\textup{H\"older}}$ regularity of the conjugacy was established in~\cite{GG} and our contribution is the bootstrap of the regularity to $C^\infty$.

Combining then Main Theorem and the local rigidity result for automorphisms with complex eigenvalues by Kalinin and Sadovskaya~\cite{KS3} yields the following corollary.
\begin{cor}
 All hyperbolic automorphisms of the 3-torus are locally rigid.
\end{cor}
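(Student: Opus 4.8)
The Corollary is immediate once the Main Theorem is available: a hyperbolic automorphism of $\T^3$ either has real spectrum, in which case the Main Theorem applies directly, or it has a pair of complex conjugate eigenvalues together with a single real eigenvalue, so that $L$ acts conformally on the associated two-dimensional eigenspace and local rigidity is the theorem of Kalinin and Sadovskaya \cite{KS3}. So the real content is the Main Theorem. The plan for it is to start from the $C^{1+\textup{H\"older}}$ conjugacy $h$ (with $h\circ f=L\circ h$) produced in \cite{GG} and to bootstrap its regularity to $C^\infty$ by establishing that $h$ is uniformly smooth along appropriate one-dimensional $f$-invariant foliations and then applying Journ\'e's lemma.

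First I would fix the geometry. After replacing $f$ by $f^{-1}$ if necessary we may assume $L$ has one eigenvalue $\lambda_1$ with $|\lambda_1|<1$ and two eigenvalues $\lambda_2,\lambda_3$ with $1<|\lambda_2|\le|\lambda_3|$; the equality case $|\lambda_2|=|\lambda_3|$, in which $L$ is conformal on the unstable plane or has a Jordan block there, needs only a minor variant, so I assume $|\lambda_2|<|\lambda_3|$. Since $L$ is linear, all the relevant domination and bunching inequalities are strict for $L$ and therefore hold for the $C^1$-small perturbation $f$ as well; thus $f$ is Anosov with a one-dimensional stable foliation $W^s_f$, a two-dimensional unstable foliation $W^u_f$, and a dominated splitting $E^u_f=E^{wu}_f\oplus E^{uu}_f$ into one-dimensional weak and strong unstable subbundles, which integrate to $f$-invariant foliations $W^{wu}_f$ and $W^{uu}_f$. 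The conjugacy $h$ carries $W^s_f,W^u_f,W^{wu}_f,W^{uu}_f$ onto the corresponding linear foliations of $L$, and differentiating $h\circ f=L\circ h$ shows that $Dh$ conjugates the derivative cocycle of $f$ to the constant cocycle $L$; in particular $Dh$ takes $E^s_f,E^{wu}_f,E^{uu}_f$ to $E^s_L,E^{wu}_L,E^{uu}_L$, so that at an $n$-periodic point $p$ the multiplier of $Df^n(p)$ along each of these line fields is $\lambda_1^n$, $\lambda_2^n$, $\lambda_3^n$ respectively.

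Next I would show that $h$ is uniformly $C^\infty$ along the two extreme foliations $W^s_f$ and $W^{uu}_f$, whose leaves are genuinely $C^\infty$. On a leaf of $W^s_f$ the conjugacy is a $C^1$ diffeomorphism onto a line that intertwines the non-stationary contraction induced by $f$ with multiplication by $\lambda_1$; since $h$ is already $C^1$ along the leaf and the periodic multipliers are exactly the powers of $\lambda_1$, the de la Llave--Marco--Moriy\'on regularity theory \cite{MM, L0, L1} for the leafwise cohomological (Liv\v sic) equation satisfied by the leafwise derivative of $h$ upgrades $h$ to $C^\infty$ along $W^s_f$, uniformly over $\T^3$. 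The same argument applied to $f^{-1}$ along $W^{uu}_f$ makes $h$ uniformly $C^\infty$ along $W^{uu}_f$.

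The main obstacle is the analogous statement for the weak unstable foliation $W^{wu}_f$; this is where non-conformality genuinely bites and where the bootstrap is substantially harder than in the two-dimensional theory. Unlike $W^s_f$ and $W^{uu}_f$, the foliation $W^{wu}_f$ is a priori only finitely smooth --- its leaf regularity is governed by the bunching ratio $\log|\lambda_3|/\log|\lambda_2|$ --- and when there are resonances among $\lambda_1,\lambda_2,\lambda_3$ (for instance $\lambda_3=\lambda_2^2$) there is a real danger that $f$ cannot be linearized along $W^{wu}_f$ at all. The plan here is to construct a non-stationary normal form along $W^{wu}_f$ in the spirit of Sternberg and Guysinsky--Katok, and to use a Liv\v sic-type argument to identify the potentially obstructing resonant coefficients as cohomology classes pinned down by the periodic data; since these classes are trivial for the linear map $L$, they are trivial for $f$, hence coboundaries, and can be removed by a smooth change of coordinates. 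In particular $W^{wu}_f$ then has $C^\infty$ leaves and $h$ is uniformly $C^\infty$ along them, the already-established smoothness of $h$ along $W^{uu}_f$ being fed back to raise the regularity in finitely many rounds. Finally, Journ\'e's lemma applied inside the two-dimensional unstable leaves gives that $h$ is uniformly $C^\infty$ along $W^u_f$, and a last application of Journ\'e's lemma to the transverse pair $W^s_f$, $W^u_f$ --- whose leaves are uniformly $C^\infty$ --- yields $h\in C^\infty(\T^3)$, so $L$ is locally rigid and the Corollary follows.
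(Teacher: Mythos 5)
Your deduction of the Corollary itself is exactly the paper's: a hyperbolic $L\in GL(3,\Z)$ has either real spectrum (Main Theorem) or one real and a complex-conjugate pair of eigenvalues, in which case $L$ is conformal on the two-dimensional (stable or unstable) eigenplane and the result is Kalinin--Sadovskaya \cite{KS3}. Had you stopped there, nothing more would be needed, since the Main Theorem is available as a standing result.

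The sketch of the Main Theorem that you append, however, departs from the paper at the decisive step and has a genuine gap there. Your treatment of $W^s_f$ and $W^{uu}_f$ is fine and is essentially the paper's Lemma~\ref{lemma_1d_bootstrap} (dynamical densities / leafwise Livšic regularity). But for $W^{wu}_f$ you propose a non-stationary normal form along the weak unstable leaves with resonant coefficients ``identified as cohomology classes pinned down by the periodic data.'' This is the mechanism that already appears in the $C^{1+\textup{H\"older}}$ theory (\cite{GG, GKS}), and it does not by itself raise the regularity of $W^{wu}_f$ beyond the finite threshold governed by $\log\lambda_3/\log\lambda_2$: the a priori leaf regularity of an intermediate foliation is capped by the bunching ratio, and no Livšic argument along $W^{wu}_f$ breaks that cap. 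The paper's actual route is entirely different: it builds the quotient conjugacy $\bar h$ on a $\T^2$ transversal to $W^{uu}_f$, proves $\bar h$ is smooth via factor Anosov dynamics on the universal cover, writes the weak unstable leaves as graphs of an antiderivative of a function $\varphi$ on $\T^2$, and shows $\varphi$ solves a cohomological equation over the first return map of the strong unstable linear flow --- a Diophantine translation of $\T^2$ --- whence smoothness of $\varphi$ and of $W^{wu}_f$ follows from R\"ussmann--Herman regularity. Without some substitute for that Diophantine small-divisor input, your bootstrap along $W^{wu}_f$ does not close. (A minor further point: the paper normalizes the spectrum by passing to an iterate of $L$, not by replacing $f$ with $f^{-1}$, and the case $|\lambda_2|=|\lambda_3|$ with real spectrum cannot simply be waved off as ``a minor variant'' of the non-conformal argument --- it is the conformal case, handled by \cite{KS3}.)
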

\begin{add}[Main Theorem in finite smoothness]
 \label{add}
 Let $L\colon\T^3\to\T^3$ be a hyperbolic automorphism with real spectrum and let $f\colon\T^3\to\T^3$ be a $C^r$, $r>1$, diffeomorphism which is $C^1$ close to $L$. Then for any $r>3$ diffeomorphism $f$ is conjugate to $L$ via a $C^{r-3-\e}$ diffeomorphism. Further, there exists a critical regularity $\kappa=\kappa(L)\in\Z$ such that if periodic data obstruction for $f$ vanish and $r\notin(\kappa,\kappa+3)$ then $f$ is conjugate to $L$ via a $C^{r-\e}$ diffeomorphism, where $\e>0$ is arbitrarily small. \end{add}
\begin{remark} The loss of 3 derivatives is due to our use of regularity results for cohomological equations over Diophantine torus translations. However outside the ``critical interval" earlier results~\cite{GG} (for $r\le\kappa$) and~\cite{L1} (for $r\ge \kappa+3$) recover missing derivatives.
\end{remark}
\begin{add}[Analytic local rigidity]
\label{add_analytic}
If the perturbation $f$ is analytic then the smooth conjugacy given by the Main Theorem is also analytic.
\end{add}

Deformation rigidity and then local (and global) rigidity was first discovered by de la Llave, Marco and Moriy\'on~\cite{MM, L0} for hyperbolic automorphisms of the 2-torus and, since then, was generalized to certain classes of hyperbolic conformal automorphisms, \ie automorphisms whose spectrum is confined to a circle of radius greater than 1 and a circle of radius less than 1 in $\C$; see~\cite{L3, KS3} and references therein.  Weaker form of local rigidity, which only yields $C^{1+\textup{H\"older}}$ regularity of the conjugating homeomorphism, was established for a rather large class of hyperbolic automorphisms of higher dimensional tori in~\cite{GKS}. On the other hand, de la Llave discovered that for reducible hyperbolic automorphisms (\ie automorphism induced by matrices with reducible characteristic polynomial) local rigidity may fail~\cite{L1}. Finally, we refer to~\cite{L1,G} for a discussion of the more general problem of smooth conjugacy for Anosov diffeomorphisms. Here we restrain ourselves to pointing out that smooth local (and global) rigidity of non-linear Anosov diffeomorphisms is still an open problem. The following is the most basic version of this problem.
\begin{problem}
 Let $L\colon\T^3\to\T^3$ be a hyperbolic automorphism. Show that there exists a $C^1$ neighborhood $\mathcal U$ of $L$ in the space of Anosov diffeomorphisms of $\T^3$ such that any two diffeomorphisms $f,g\in\mathcal U$ with the same periodic data are $C^\infty$ conjugate.
\end{problem}

One motivation for studying local rigidity of Anosov dynamical systems comes from spectral rigidity program in geometry of negatively curved manifolds (which was begun in~\cite{GK}; also see~\cite{Ham, CrSh} for more recent developments). Arguably, the local rigidity problem for Anosov systems is the analogue of marked length spectrum rigidity problem in geometry of negatively curved manifolds. We refer to~\cite[Section 4.1]{FGO} for more discussion of this analogy.

In Section~\ref{sec_2} we summarize some well known tools which we use in the proof of the Main Theorem. Section~\ref{sec_3} is mostly devoted to the proof of the Main Theorem.  At the end of Section~\ref{sec_3} we briefly explain how the proof of the Main Theorem in combination with a result of de la Llave yields Addendum~\ref{add}. Section~4 is devoted to the proof of Addendum~\ref{add_analytic}.

{\bfseries Acknowledgements.} This project started in the Spring of 2011 when the author realized that regularity results for the cohomological equation over Diophantine translations can be used to bootstrap regularity of the leaves of the intermediate foliation. The author would like to thank Rafael de la Llave for invaluable discussions during that period. The author would like to thank Anatole Katok for his attention to this project and for encouraging the author to finish it. The project was completed in Spring 2014 with the idea of dimension reduction via factor dynamics on the universal cover, which allowed the author to dispose of the spectral gap assumption needed for the earlier version of the result. In the Spring 2014 the author was visiting IMS at Stony Brook University and he would like to thank IMS for providing excellent working conditions and the people at Stony Brook for their hospitality.

The author is thankful to the referees whose careful reading lead to a significant improvement of the paper.

\section{Preliminaries}
\label{sec_2}
Let $W$ be a foliation on a smooth Riemannian manifold $M$. We will denote by $W(x)$ the leaf of $W$ passing through $x$ and by $W(x,R)$ for a ball of (intrinsic) radius $R$ in $W(x)$ centered at $x$. We will also use superscript $loc$ to denote the local leaf $W^{loc}(x)$; \ie $W^{loc}(x)=W(x,\delta)$, where $\delta>0$ is an appropriately small constant.

\subsection{Anosov diffeomorphisms}

Let $M$ be a compact Riemannian manifold. Hyperbolic toral automorphisms considered in the introduction are particular instances of Anosov diffeomorphisms. Recall that a diffeomorphism $g\colon M\to M$ is called {\it Anosov} if there exist a decomposition of the tangent bundle $TM$ into two
$g$-invariant continuous distributions $E^{s}_g$ and $E^{u}_g$, and
constants $C>0$, $\lambda\in(0,1)$, such that for all $n>0$,
 $$
   \begin{aligned}
  \| Dg^n(v) \| \leq C \lambda^n \| v \|&
      \;\text{ for all }v \in E^{s}_g, \\
  \| Dg^{-n}(v) \| \leq C \lambda^n \| v \|&
      \;\text{ for all }v \in E^{u}_g.
  \end{aligned}
 $$
The distributions $E^{s}_g$ and $E^{u}_g$ are called {\it the stable and unstable distributions} of $g$. These distributions integrate to {\it the stable and unstable foliations} $W^s_g$ and $W^u_g$, respectively. Foliations $W^s_g$ and $W^u_g$ are continuous foliations with smooth leaves (to be defined shortly in the next subsection).  The leaves $W^s_g(x)$ and $W^u_g(x)$, $x\in M$, are immersed copies of Euclidian spaces of dimension $\dim E^s_g$ and $\dim E^u_g$, respectively. These leaves can be characterized as follows
\begin{align}\label{eq_leaves}
\begin{split}
W^s_g(x)=\{ y\in M: \; d(f^n(y),f^n(x))\to 0, n\to+\infty \}\\
W^u_g(x)=\{ y\in M: \; d(f^n(y),f^n(x))\to 0, n\to-\infty \}
\end{split}
\end{align}
where $d$ is the induced metric on $M$.

Any diffeomorphism $f$ which is sufficiently $C^1$-close to $g$ is also Anosov and the celebrated structural stability theorem asserts that $f$ is conjugate to $g$
$$
h\circ g= f\circ h,
$$
where $h\colon M\to M$ is a homeomorphism which is $C^0$ close to the identity map.
Using~(\ref{eq_leaves}) we obtain that $h$ preserves the stable and unstable foliations.

Note that in the case when $g$ is a hyperbolic toral automorphism the stable and unstable foliations are linear foliations by totally irrational linear subspaces.

\subsection{Journ\'e's Lemma}
\label{sec_journe}
A foliation $W$ on a manifold $M$ is called a {\it uniformly continuous foliation with $C^r$ leaves} if the leaves of $W$ are uniformly $C^r$ injectively immersed submanifolds of $M$ and the tangent bundle $TW$ is a (uniformly) continuous subbundle of the full tangent bundle $TM$.

%the jet bundle of $W$
%$$
%\bigcup_{x\in M,i\le r} T^i_xW
%$$
%is a (uniformly) continuous subbundle of the full $r$-jet bundle of $M$. 

We say that a function $\varphi \colon M\to \R$ {\it is uniformly} $C^q$, $q\le r$, {\it along } $W$ and we write $\varphi\in C^q_W(M)$ if the restrictions of $\varphi$ to the leaves of $W$ have uniformly bounded derivatives of all orders up to $q$.
\begin{lemma}[Journ\'e~\cite{J}]
\label{lemma_journe}
Let $W$ and $V$ be two mutually transverse uniformly continuous foliations with $C^r$ leaves on a manifold $M$. Let $\varphi \colon M\to \R$ be a function. Assume that $\varphi\in C^r_W(M)\cap C^r_V(M)$. Then $\varphi$ is $C^{r-\e}$ for any $\e>0$.
\end{lemma}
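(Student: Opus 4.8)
The plan is to reduce the statement, which is local, to a sharp two-variable estimate and to prove that estimate by harmonic analysis. First I would fix $p\in M$ and a smooth chart centered at $p$ in which $T_pW$ and $T_pV$ are complementary coordinate subspaces; this uses that $W\pitchfork V$ and $TW\oplus TV=TM$, which for the stable and unstable foliations $W^s_f,W^u_f$ of our application is automatic. A dimension-reduction argument then lets one assume $\dim M=2$ and $\dim W=\dim V=1$, and after a linear change of coordinates one may further assume that the leaves of $W$ are uniformly $C^r$ graphs over the $x$-axis and those of $V$ uniformly $C^r$ graphs over the $y$-axis, with these graphs depending only continuously on the leaf; \ie $W$ is uniformly near-horizontal and $V$ uniformly near-vertical. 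What remains is: a continuous $\varphi$ on a square which is uniformly $C^r$ along such a near-horizontal foliation and uniformly $C^r$ along such a near-vertical one is $C^{r-\e}$ on a slightly smaller square.

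To prove this I would use the finite-difference (H\"older--Zygmund, equivalently Littlewood--Paley) description of the class $C^{r-\e}$ and argue by induction on $\lceil r\rceil$. In the model case of exactly horizontal and vertical foliations the scheme is clean: the base case $r\le 1$ is immediate, since being uniformly H\"older-$r$ along two transverse directions gives joint H\"older-$r$ by the triangle inequality; and for the inductive step $\partial_x\varphi$ and $\partial_y\varphi$ exist and are uniformly $C^{r-1}$ in $x$ resp. in $y$, while interpolating these controlled leafwise derivatives against the $C^0$-smallness of difference quotients in the transverse variable --- applied iteratively to climb the H\"older scale --- shows that they are also H\"older-$(r-1-\e)$ in the transverse variable, whence $\varphi\in C^{r-\e}$. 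For genuine near-horizontal and near-vertical foliations I would run the same scheme using honest leafwise differentiation along $W$ and $V$ in place of $\partial_x,\partial_y$, exploiting that the two leafwise directions span $TM$ and vary continuously so that the leafwise gradient can be inverted back to the Euclidean one; reassembling over all $p$ then yields the lemma.

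The hard part --- and the reason the loss $\e$ is intrinsic to this formulation --- is that the holonomy maps of $W$ and $V$, equivalently the transverse dependence of the leaves, are only continuous, not $C^r$. Consequently one cannot straighten either foliation by a $C^r$ change of coordinates, and one cannot average along the leaves of one foliation without destroying regularity along the other; every estimate in the induction must be built from genuinely leafwise differentiation together with ordinary Euclidean mollification, the foliations entering only through the near-horizontal/near-vertical normalization. Making these constraints compatible with a closed induction is precisely the content of Journ\'e's lemma, and it is where essentially all of the work lies.
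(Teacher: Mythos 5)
The paper offers no proof of this lemma: it is quoted verbatim from Journ\'e~\cite{J}, so your attempt has to be judged as a reproof of Journ\'e's result itself.

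There is a genuine gap, and it sits exactly where you locate ``essentially all of the work.'' Your induction on $\lceil r\rceil$ does not close as stated. After one step you know that the leafwise derivative $\partial_W\varphi$ is uniformly $C^{r-1}$ \emph{along the leaves of $W$ only}; you know nothing about its behaviour along $V$, because differentiating along $W$ destroys the hypothesis of regularity along $V$ (the two leafwise derivatives do not commute past the merely continuous transverse structure). Hence $\partial_W\varphi$ does not satisfy the inductive hypothesis ``uniformly $C^{r-1}$ along both foliations,'' and the phrase ``interpolating these controlled leafwise derivatives against the $C^0$-smallness of difference quotients in the transverse variable, applied iteratively to climb the H\"older scale'' is a restatement of the conclusion, not an argument: obtaining any positive H\"older exponent for $\partial_W\varphi$ in the $V$-direction is precisely the content of the lemma. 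Journ\'e's actual proof avoids transverse differentiation altogether: it uses the characterization of $C^{n+\alpha}$ by pointwise approximation by degree-$n$ polynomials with error $O(R^{n+\alpha})$, takes the leafwise Taylor/approximating polynomials along $W$-leaves through a grid of $O(n)$ well-separated points on a $V$-leaf, and Lagrange-interpolates them transversally; the key estimate is that Lagrange interpolation at points separated by $\sim R$ of data known to accuracy $O(R^{n+\alpha})$ produces a two-variable polynomial with controlled coefficients and the same order of approximation. That interpolation device (or an equivalent smoothing-operator construction \`a la Hurder--Katok) is the missing idea. Two secondary issues: the reduction to $\dim M=2$ with two one-dimensional foliations is asserted but not justified (regularity on all two-dimensional slices does not formally give joint regularity without an argument, and Journ\'e works in arbitrary complementary dimensions directly); and the lemma as used in the paper needs $TW\oplus TV=TM$ as a hypothesis, which you correctly note but which your ``near-horizontal/near-vertical'' normalization quietly upgrades to a uniform transversality angle that should be stated.
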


If $W_1$ is a uniformly continuous foliation with $C^r$ leaves on a manifold $M_1$, $W_2$ is a uniformly continuous foliations with $C^r$ leaves on a manifold $M_2$ and $h\colon M_1\to M_2$ is homeomorphism which sends $W_1$ to $W_2$ then we will write 
$$h\in\Diff^r_{W_1,W_2}(M_1, M_2)$$
 if the restrictions of $h$ to the leaves of $W_1$ and their inverses are uniformly $C^r$.

The following is a straightforward corollary of the Journ\'e's Lemma, which is widely used in smooth dynamics.
\begin{cor}
\label{cor_journe}
Let $W_1$ and $V_1$ be mutually transverse uniformly continuous foliations with $C^r$ leaves on a manifold $M_1$ and $W_2$ and $V_2$ be mutually transverse uniformly continuous foliations with $C^r$ leaves on a manifold $M_2$.  Assume that  a homeomorphism $h\colon M_1\to M_2$ belongs to both $\Diff^r_{W_1,W_2}(M_1, M_2)$ and $\Diff^r_{V_1,V_2}(M_1, M_2)$. Then $h$ is $C^{r-\e}$ diffeomorphism for any $\e>0$.
\end{cor}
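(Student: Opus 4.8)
The plan is to reduce the statement to the scalar Journ\'e Lemma (Lemma~\ref{lemma_journe}) by passing to local coordinates and applying it to the coordinate components of $h$. First I would fix a point $p\in M_1$, a smooth chart $\phi_1\colon U_1\to\R^N$ around $p$ (here $N=\dim M_1=\dim M_2$), and a smooth chart $\phi_2\colon U_2\to\R^N$ around $h(p)$; after shrinking $U_1$ we may assume $h(U_1)\subset U_2$. Writing $\tilde h=\phi_2\circ h\circ\phi_1^{-1}=(\tilde h_1,\dots,\tilde h_N)$ for the coordinate representation of $h$, it suffices to show that each real-valued function $\tilde h_j$ on $\phi_1(U_1)$ is $C^{r-\e}$, since $p$ is arbitrary and being $C^{r-\e}$ is a local property.

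The key point is that $\tilde h_j\in C^r_{\phi_1(W_1)}\cap C^r_{\phi_1(V_1)}$, where $\phi_1(W_1),\phi_1(V_1)$ are the images of the (restricted) foliations $W_1,V_1$, which are again mutually transverse uniformly continuous foliations with $C^r$ leaves. Indeed, since $h\in\Diff^r_{W_1,W_2}(M_1,M_2)$, the restriction of $h$ to each leaf of $W_1$ is a uniformly $C^r$ map into the corresponding leaf of $W_2$; the leaves of $W_2$ are $C^r$-submanifolds of $M_2$ and $\phi_2$ is smooth, so $\phi_2\circ h$ is uniformly $C^r$ along $W_1$, hence so is each $\tilde h_j$. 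The same argument with $V_1$ in place of $W_1$ gives $\tilde h_j\in C^r_{\phi_1(V_1)}$. Applying Lemma~\ref{lemma_journe} to $\tilde h_j$ with the transverse pair $\phi_1(W_1),\phi_1(V_1)$ then yields that $\tilde h_j$ is $C^{r-\e}$ for every $\e>0$. Thus $h$ is $C^{r-\e}$.

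It remains to treat $h^{-1}$. Because the definition of $\Diff^r_{W_1,W_2}$ also requires the inverses of the leaf restrictions to be uniformly $C^r$, the map $h^{-1}$ sends $W_2$ to $W_1$ and $V_2$ to $V_1$ and lies in $\Diff^r_{W_2,W_1}(M_2,M_1)\cap\Diff^r_{V_2,V_1}(M_2,M_1)$; the argument above applied verbatim to $h^{-1}$ shows it is $C^{r-\e}$ as well, so $h$ is a $C^{r-\e}$ diffeomorphism. The only step that needs genuine care is the key point above: one must check that ``$C^r$ regularity along a leaf'' really is preserved when one post-composes the leaf restriction with the ambient chart of $M_2$, which uses that the target leaves of $W_2$ and $V_2$ are themselves $C^r$-embedded, and that all the bounds involved are uniform so that the hypothesis $\varphi\in C^r_W(M)\cap C^r_V(M)$ of Journ\'e's Lemma is met on the nose. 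Everything else is formal.
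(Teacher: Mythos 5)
Your proposal is correct and is exactly the standard argument the paper has in mind when it calls this a ``straightforward corollary'' of Journ\'e's Lemma (the paper gives no proof): pass to smooth charts, apply Lemma~\ref{lemma_journe} to each coordinate component of $h$ using that post-composition with a smooth ambient chart preserves uniform $C^r$ regularity along the (uniformly $C^r$ immersed) leaves, and repeat for $h^{-1}$. Nothing further is needed.
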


\subsection{Affine structures on expanding foliations}
\label{sec_affine_structures}

Let $W$ be a one dimensional uniformly continuous foliation with $C^r$ leaves on a complete (not necessarily compact) manifold $M$. Also let $f\colon M\to M$ be a diffeomorphism which leaves $W$ invariant  and uniformly expands the leaves of $W$. Then we say that $W$ is an {\it expanding foliation for} $f$. An expanding foliation $W$ can be equipped with dynamical densities which are defined using telescopic products of Jacobians of $f|_W$ as follows
$$
\rho_x(y)=\prod_{n\ge 1}\frac{D_Wf(f^{-n}(x))}{D_Wf(f^{-n}(y))},\, \,\,x\in M,\;\; y\in W(x),
$$
where $D_Wf$ is the Jacobian of the restriction $DF|_{TW}$.
\begin{lemma}[\cite{L1}, Lemma 4.3]
\label{lemma_density}
If $f$ is a uniformly $C^r$ diffeomorphism and $W$ is an expanding foliation for $f$ then the dynamical densities $\rho_x(\cdot)$, $x\in M$, are uniformly $C^{r-1}$ on $W(x,R)$ for any $R>0$.
\end{lemma}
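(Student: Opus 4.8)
The plan is to pass to logarithms and prove that the resulting series of functions converges in the uniform $C^{r-1}$ topology along the leaves of $W$. Since $f$ leaves $W$ invariant and is uniformly $C^r$, the restrictions $f|_{W(x)}\colon W(x)\to W(f(x))$ are uniformly $C^r$ maps between leaves (as usual for invariant foliations); as $W$ is one dimensional and $f$ expands $W$, the leafwise Jacobian $D_Wf$ is positive, bounded away from $1$, and, being the first leafwise derivative of a uniformly $C^r$ map, it is uniformly $C^{r-1}$ along $W$. Setting $\phi=\log D_Wf\in C^{r-1}_W(M)$, for $y\in W(x)$ one has
\[
\log\rho_x(y)=\sum_{n\ge1}\bigl(\phi(f^{-n}(x))-\phi(f^{-n}(y))\bigr)=:\sum_{n\ge1}g_n(y),
\]
so it would suffice to prove that $\sum_n g_n$ converges in $C^{r-1}(W(x,R))$ with bounds independent of $x$; composing with the smooth map $t\mapsto e^t$ would then give the statement for $\rho_x$ itself.

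The key estimate I would establish is a ``$C^{r-1}$ bounded distortion'' bound for the backward iterates: there should be constants $C>0$, $\lambda\in(0,1)$, independent of $x$, with $f^{-n}(W(x,R))\subset W(f^{-n}(x),C\lambda^nR)$ and
\[
\bigl\|D_W^k(f^{-n}|_{W(x,R)})\bigr\|\le C_k\,\lambda^n,\qquad 1\le k\le r-1,
\]
uniformly in $x$ and $n$. The case $k=1$ is just the uniform contraction of $W$ under $f^{-1}$ (passing to an adapted metric). For $k\ge2$ I would induct on $k$: writing $f^{-n}=f^{-1}\circ f^{-(n-1)}$ and applying the Fa\`a di Bruno formula, $D_W^k(f^{-n})$ equals $D_Wf^{-1}(f^{-(n-1)}(\cdot))\cdot D_W^kf^{-(n-1)}$ plus a sum of terms each containing at least two leafwise derivatives of $f^{-(n-1)}$ of order strictly less than $k$; by the $k=1$ case and the inductive hypothesis each of those remaining terms is $O(\lambda^{2(n-1)})$, while (after slightly enlarging $\lambda$) the first term is bounded by $\lambda\,\|D_W^kf^{-(n-1)}\|$. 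Hence $a_n:=\sup_{W(x,R)}\|D_W^kf^{-n}\|$ obeys $a_n\le\lambda a_{n-1}+C\lambda^{2(n-1)}$, whence $a_n\le C_k\lambda^n$. When $r$ is not an integer I would propagate, in exactly the same manner, the leafwise H\"older seminorm of order $r-1$ of the top derivative; compactness of $M$ together with uniformity of $f$ and of the leaf charts should make all constants independent of $x$. I expect this step to be the main obstacle; the rest is bookkeeping with the chain rule and a geometric series.

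Granting this estimate, I would apply the Fa\`a di Bruno formula once more, now to $g_n=\phi(f^{-n}(x))-\phi\circ f^{-n}$: for $1\le k\le r-1$ every term of $D_W^k(\phi\circ f^{-n})$ contains at least one factor $D_W^jf^{-n}$ with $j\ge1$, hence is $O(\lambda^n)$, while its remaining factors are leafwise derivatives of $\phi$ of order $\le r-1$ evaluated along $W(f^{-n}(x),C\lambda^nR)$ and so are bounded by $\|\phi\|_{C^{r-1}_W}$; moreover $\sup_{W(x,R)}|g_n|\le\|\phi\|_{C^1_W}\cdot C\lambda^nR$, which in particular shows the infinite product defining $\rho_x$ converges. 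This should yield $\|g_n\|_{C^{r-1}(W(x,R))}\le C(R)\,\lambda^n$ uniformly in $x$, so that $\sum_n g_n$ converges absolutely in $C^{r-1}(W(x,R))$ and $\log\rho_x\in C^{r-1}_W$ with bounds on $W(x,R)$ depending only on $R$. Composing with $\exp$ would then complete the argument.
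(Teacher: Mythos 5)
Your argument is correct and is essentially the standard telescoping/bounded-distortion proof of de la Llave's Lemma~4.3 in~\cite{L1}, which is exactly what the paper cites for this statement rather than reproving it. One small caveat: since $M$ is only assumed complete, do not invoke compactness of $M$ for the uniformity of the constants --- it follows instead from the hypotheses that $f$ is \emph{uniformly} $C^r$ and that $W$ has \emph{uniformly} $C^r$ leaves, which is all your estimates actually use.
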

It is easy to check that these families of densities are unique in the class of densities which posses the following properties
\begin{enumerate}
\item $\rho_x(x)=1$, $x\in M$;
\item $\rho_x(\cdot)$, $x\in M$, are uniformly continuous on $W(x, R)$ for a fixed $R>0$;
\item $\rho_{f(x)}(f(y))=\frac{D_Wf(x)}{D_Wf(y)}\rho_x(y)$ for all $x\in M$ and $y\in W(x)$.
\end{enumerate} 
We refer to~\cite{L1} for more information on dynamical densities and relation to SRB measures.

\begin{lemma}
\label{lemma_1d_bootstrap}
Let $W_1$ and $W_2$ be one dimensional expanding foliations for $g\colon M_1\to M_1$ and $f\colon M_2\to M_2$, respectively. Assume that $g$ is conjugate to $f$ via a homeomorphism $h\in \Diff^1_{W_1,W_2}(M_1, M_2)$. Also assume that $f$ and $g$ are uniformly $C^r$ diffeomorphisms. Then $h\in \Diff^{r}_{W_1,W_2}(M_1, M_2)$.
\end{lemma}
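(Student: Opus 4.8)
The plan is to reduce the statement to a one--dimensional problem on individual leaves and then to bootstrap the regularity using the dynamical densities from Lemma~\ref{lemma_density}. Since $h\circ g=f\circ h$ and $h$ carries $W_1$ onto $W_2$, for each $x\in M_1$ the restriction $h|_{W_1(x)}\colon W_1(x)\to W_2(h(x))$ is a $C^1$ diffeomorphism which conjugates the expanding map $g|_{W_1(x)}$ to $f|_{W_2(h(x))}$; moreover the hypothesis $h\in\Diff^1_{W_1,W_2}(M_1,M_2)$ guarantees that the leafwise Jacobian $D_{W_1}h$ is continuous along $W_1$--leaves, uniformly bounded away from $0$ and $\infty$, and that $h$ distorts intrinsic leaf--distances by a uniformly bounded factor. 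Let $\rho^1$ and $\rho^2$ be the dynamical densities of $g$ on $W_1$ and of $f$ on $W_2$; by Lemma~\ref{lemma_density} they are uniformly $C^{r-1}$ on leafwise balls of any fixed radius, and by the uniqueness statement accompanying it they are the only densities satisfying properties (1)--(3).

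First I would show that $h$ intertwines these densities. Set $\sigma_x(y):=\rho^2_{h(x)}(h(y))\,|D_{W_1}h(y)|/|D_{W_1}h(x)|$ for $y\in W_1(x)$; by the remarks above $\sigma_x$ is a genuine density, uniformly continuous on $W_1(x,R)$ for each $R$, and $\sigma_x(x)=1$. Differentiating $h\circ g=f\circ h$ along leaves yields $D_{W_1}h(g(y))\,D_{W_1}g(y)=D_{W_2}f(h(y))\,D_{W_1}h(y)$, and substituting this together with property (3) for $\rho^2$ into the definition of $\sigma$ gives, after the Jacobian factors cancel, property (3) for $\sigma$ relative to $g$. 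By uniqueness, $\sigma=\rho^1$, that is,
\[
|D_{W_1}h(y)|=|D_{W_1}h(x)|\,\frac{\rho^1_x(y)}{\rho^2_{h(x)}(h(y))},\qquad y\in W_1(x).
\]

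With this identity the rest is a bootstrap. Working in (uniformly $C^r$) arc--length parametrizations of the leaves, suppose $h$ is already known to be uniformly $C^k$ along $W_1$--leaves for some integer $k$ with $1\le k<r$; since $k\ge1$ throughout, the naive composition rules for Hölder classes apply. Then $y\mapsto\rho^2_{h(x)}(h(y))$, being the composition of the uniformly $C^{r-1}$ function $\rho^2_{h(x)}(\cdot)$ with the uniformly $C^k$ map $h|_{W_1(x)}$ (whose image lies in a leafwise ball of controlled radius), is uniformly $C^{\min(k,r-1)}$ along $W_1(x)$; since $\rho^1_x$ is uniformly $C^{r-1}$ and the denominator is bounded below, the right--hand side of the identity is uniformly $C^{\min(k,r-1)}$ with bounds independent of $x$. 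Hence $D_{W_1}h$ is uniformly $C^{\min(k,r-1)}$ along $W_1$--leaves, so $h$ is uniformly $C^{\min(k+1,r)}$ along them. Iterating from $k=1$ at most $\lceil r\rceil$ times, $h$ becomes uniformly $C^r$ along $W_1$--leaves, and the leafwise inverse function theorem (using $|D_{W_1}h|$ bounded below) gives the same for $h^{-1}$ along $W_2$--leaves, so $h\in\Diff^r_{W_1,W_2}(M_1,M_2)$.

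The conceptual heart, and the step I expect to require the most care, is the first one: the point is that the topological conjugacy, merely assumed $C^1$ along leaves, is \emph{forced} to respect the canonical densities (equivalently, the affine structures) on the expanding leaves, which converts the hypothesis into the rigid identity above; establishing this rests on checking that $\sigma$ is admissible — continuity and the two--sided bound on $D_{W_1}h$ are exactly where $h\in\Diff^1$ is used — and that it satisfies cocycle property (3). The bootstrap itself is mechanical, the only delicate points being the uniformity of all estimates in $x$ and the exponent bookkeeping when $r$ is not an integer.
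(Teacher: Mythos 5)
Your proposal is correct and follows essentially the same route as the paper: both arguments transport the dynamical densities by $h$, invoke the uniqueness of densities satisfying properties (1)--(3) to get the identity expressing $D_{W_1}h$ as a ratio of the two $C^{r-1}$ density families, and then read off the leafwise regularity of $h$ and $h^{-1}$. The only cosmetic difference is that the paper asserts the final regularity of $D_{W_1}h$ in one line, while you justify it with an explicit H\"older bootstrap on the composition $\rho^2_{h(x)}\circ h$ — a legitimate way to fill in that step.
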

\begin{proof}
Let $\rho_x(\cdot)$ be the family of dynamical densities for $g$. Denote by $D_Wh$ the Jacobian of the restriction $Dh|_{TW_1}\colon TW_1\to TW_2$. Then
$$
\tilde\rho_{h(x)}(h(y))\stackrel{\mathrm{def}}{=}\frac{D_Wh(x)}{D_Wh(y)}\rho_x(y)
$$
satisfies all properties of dynamical densities for $f$ and, hence, is the unique family of dynamical densities for $f$. By Lemma~\ref{lemma_density}, both $\rho_x(\cdot)$ and $\tilde \rho_{h(x)}(\cdot)$ are uniformly $C^{r-1}$ along $W_1$ and $W_2$, respectively. Hence $D_Wh$ is uniformly $C^{r-1}$. Because we can also apply the same argument to $h^{-1}$ we conclude that $h\in \Diff^{r}_{W_1,W_2}(M_1, M_2)$.
\end{proof}

\subsection{Survival of the fine splitting under $C^1$ small perturbations}
\label{sec_survival_3d}
We assume that hyperbolic automorphism $L\colon\T^3\to\T^3$ has real eigenvalues $\lambda_1$, $\lambda_2$ and $\lambda_3$ such that
\begin{gather}\label{eq_lambdas}
0<\lambda_1<1<\lambda_2<\lambda_3
\end{gather}
In this case the unstable distribution splits as follows
$$
E^u_L=E^{wu}_L\oplus E^{uu}_L,
$$
where $E^{wu}_L$ corresponds to the eigendirection of $\lambda_2$ and $E^{uu}_L$ corresponds to the eigendirection of $\lambda_3$. Let $f\colon\T^3\to\T^3$ be a perturbation of $L$. It well known (see \eg~\cite[Chapter 3]{Pbook}) that if $f$ is sufficiently $C^1$-close to $L$ then $f$ is Anosov and this splitting survives, \ie
$$
E^u_f=E^{wu}_f\oplus E^{uu}_f,
$$
where the expansion along $E^{wu}_f$ is close to $\lambda_2$ and the expansion along $E^{uu}_f$ is close to $\lambda_3$. Distributions $E^{wu}_f$ anf $E^{uu}_f$ integrate uniquely to {\it weak unstable} and {\it strong unstable} foliations $W^{wu}_f$ and $W^{uu}_f$, respectively. While integrability of $E^{uu}_f$ follows from general theory (see \eg~\cite[Chapter 4]{Pbook}) integrability of $E^{wu}_f$ is more subtle (see \eg~\cite[Lemma 1]{GG}, see~\cite[Corollary 2.2]{LW} for a proof of a more general result). Both of foliations $W^{wu}_f$ and $W^{uu}_f$ subfoliate the 2-dimensional unstable foliation $W^u_f$. Foliation $W^{uu}_f$ is a continuous foliation with smooth leaves. However $W^{wu}_f$ is a continuous foliation with only $C^{1+\textup{H\"older}}$ leaves (we refer to~\cite{JPL} for a thorough discussion of the lack of regularity phenomenon).

\subsection{Cohomological equation over Diophanitine translations on the torus}
\label{sec_herman}

Consider a translation $T\colon\T^m\to\T^m$ on the $m$-torus given by
$$
x\mapsto x+\vec\beta \mod \Z^m,
$$
where $\vec\beta$
%=(\beta_1,\beta_2,\ldots \beta_m)$
 is a vector which satisfies the following Diophantine condition
\begin{equation}
\label{eq_diophantine}
\left|\langle\vec\beta,\vec p\rangle-q\right|>\frac{c\,\,\,}{|\vec p\,|^m}
\end{equation}
for all $\vec p\in\Z^m\backslash\{0\}$ and $q\in\Z$, and some $c=c(\beta)>0$.

Consider a function $a\colon\T^m\to \R$ with zero average. The equation 
\begin{equation}
\label{eq_cohomological}
\varphi(Tx)-\varphi(x)=a(x)
\end{equation}
is called {\it cohomological equation}. If $a\in C^r(\T^m)$  then this equation admits a solution $\varphi\in C^{r-m-\e}(\T^m)$ for any $\e>0$~\cite{R} (also see~\cite[Proposition A.8.1]{Her}). Moreover, this solution is unique in $C^0(\T^m)$ up to an additive constant.

\section{Proof of the Main Theorem}
\label{sec_3}
Note that by passing to an appropriate (possibly negative) iterate of $L$ we may assume that the spectrum of $L$ satisfies~(\ref{eq_lambdas}). 

\subsection{Gogolev-Guysinsky result}
\label{sec_gg}
Our starting point is a weaker form of local rigidity established in~\cite{GG}. It was shown that for sufficiently $C^1$ small perturbations $f$ with vanishing periodic data obstructions, the conjugacy $h$ between $L$ and $f$ is $C^{1+\e}$, where $\e$ depends on $L$ only. In particular,
$$
h(W_L^{*})=W_f^{*},\,\mbox{for}\,\, \,*=s, wu, uu.
$$
Note that, by Lemma~\ref{lemma_1d_bootstrap}, we have 
\begin{equation}
\label{eq_s_uu_smooth}
h\in\Diff^\infty_{W_L^s, W_f^s}(\T^3)\cap\Diff^\infty_{W_L^{uu}, W_f^{uu}}(\T^3).
\end{equation}
Therefore our goal is to show that $W^{wu}_f$ is a continuous foliation with smooth leaves. After that, again by Lemma~\ref{lemma_1d_bootstrap}, we would have $h\in\Diff^\infty_{W_L^{wu}, W_f^{wu}}(\T^3)$ and Corollary~\ref{cor_journe} would imply that $h$ is a $C^\infty$ diffeomorphism.

\subsection{Smoothness of strong unstable foliation}
\label{sec_strong_unstable}

\begin{lemma}
 \label{lemma_strong_unstable}
 The strong unstable foliation $W_f^{uu}$ is a smooth foliation.
\end{lemma}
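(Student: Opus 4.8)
The plan is to obtain the lemma from the classical regularity theory of invariant foliations (the graph transform, or $C^r$ section theorem, of Hirsch--Pugh--Shub), applied \emph{inside} the two-dimensional unstable leaves of $f$. Recall from Section~\ref{sec_survival_3d} that $W^u_f$ is a continuous foliation with $C^\infty$ leaves, that $W^{uu}_f$ subfoliates $W^u_f$, and that the leaves of $W^{uu}_f$ are already known to be $C^\infty$. Hence the substance of the statement is that $E^{uu}_f$ is $C^\infty$ along $W^u_f$, equivalently that $W^{uu}_f$ restricts to a $C^\infty$ foliation of each leaf $W^u_f(x)$; the new information, beyond smoothness of the $W^{uu}_f$-leaves, concerns how $E^{uu}_f$ varies across those leaves.

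The mechanism is the following. On the smooth surface $W^u_f(x)$ the line field $E^{uu}_f$ is the attracting fixed section of the projectivized derivative cocycle $y\mapsto g_y:=\mathbb P\bigl(Df(y)|_{E^u_f(y)}\bigr)$ on the projective line bundle of $W^u_f$: since $Df$ expands $E^{uu}_f$ by a factor close to $\lambda_3$ and $E^{wu}_f$ by a factor close to $\lambda_2$, near $[E^{uu}_f(y)]$ the map $g_y$ is a contraction of $\R P^1$ of rate close to $\lambda_2/\lambda_3<1$, depending $C^{r-1}$ on the base point when $f\in C^r$. Writing $E^{uu}_f$ as the uniform limit of the backward compositions $g_{f^{-1}y}\circ\dots\circ g_{f^{-k}y}$ and differentiating the resulting series $r$ times along the two directions of $W^u_f$, the $k$-th summand is bounded, up to a constant, by $(\lambda_2/\lambda_3)^k$ (from the $k$ contractions) times $\bigl(\|Df^{-k}|_{E^{wu}_f}\|\bigr)^r\approx\lambda_2^{-rk}$ or $\bigl(\|Df^{-k}|_{E^{uu}_f}\|\bigr)^r\approx\lambda_3^{-rk}$ (from transporting the base point $k$ steps backward). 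The resulting geometric series converge provided
\[
\lambda_2^{\,1-r}<\lambda_3\qquad\text{and}\qquad\lambda_2<\lambda_3^{\,r+1},
\]
and both inequalities hold for \emph{every} $r\ge1$ simply because $1<\lambda_2<\lambda_3$. Therefore $E^{uu}_f$ is uniformly $C^r$ along $W^u_f$ for all $r$, i.e. $C^\infty$. The only arithmetic input is the spectral gap $\lambda_2<\lambda_3$, which survives $C^1$-small perturbations with room to spare at every order, so no hypothesis beyond the survival of the splitting recorded in Section~\ref{sec_survival_3d} is needed.

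The point that needs care is that individual unstable leaves are not $f$-invariant, so the cocycle $(g_y)$ lives over the non-invariant return structure of a finite atlas of $W^u_f$-foliation boxes rather than over a genuine dynamical system; this is precisely the setting of the Hirsch--Pugh--Shub theorem, so I would either cite that theorem directly (the bunching having been checked above) or else carry out the telescoping estimate uniformly over $x\in\T^3$, using that $Df$ identifies the fiber over $y$ with the fiber over $f(y)$. One may alternatively run the argument on the $f$-invariant unstable leaf of a fixed point of $f$. The substantive analytic work --- and, I expect, the place where the hypothesis that $L$ is an automorphism is genuinely used --- is deferred to the smoothness of the weak unstable foliation. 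Finally, for Addendum~\ref{add} I would note that the identical estimate applied to $f\in C^r$ yields $E^{uu}_f\in C^{r-1}$ along $W^u_f$, with no loss of derivatives attributable to the bunching.
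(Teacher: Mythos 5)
Your bunching argument is fine as far as it goes, but it only proves part of the statement --- essentially Step~1 of the paper's three-step proof, which the paper gets by citing Proposition~3.9 of~\cite{KS2}: the restriction of $W^{uu}_f$ to each leaf of $W^u_f$ is uniformly smooth, and the only input is $1<\lambda_2<\lambda_3$. The gap is in your opening claim that this is ``the substance of the statement.'' The lemma asserts that $W^{uu}_f$ is a smooth foliation of $\T^3$, i.e.\ (as the paper recalls) that it has smooth leaves \emph{and} smooth holonomy maps between two-dimensional transversals. Such transversals contain the stable direction, so one also needs to control how $E^{uu}_f$ varies along $W^s_f$, and no bunching condition of the type you write down can deliver that: for a generic $C^1$-small Anosov perturbation $f$ of $L$ the strong unstable distribution is only H\"older in the stable direction. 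This transverse regularity is exactly where the rigidity hypothesis enters the lemma --- contrary to your closing remark that the hypothesis on $L$ is only used later for the weak unstable foliation. The full statement is genuinely needed downstream: the holonomies $\pi^{uu}$ and $p^{uu}$ onto the transversal $\T^2$ and onto $\widetilde W^{s+wu}_L(0)$, and the function $A(x)=d^{uu}_f(x,R(x))$ on $\T^2$, must be smooth in the stable direction as well.

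The paper closes this gap in Steps~2 and~3. By~(\ref{eq_s_uu_smooth}) (which rests on Lemma~\ref{lemma_1d_bootstrap} and hence on the vanishing of the periodic data via the Gogolev--Guysinsky $C^{1+\e}$ result), the conjugacy $h$ is uniformly smooth along $W^s_L$ and $W^{uu}_L$; applying Corollary~\ref{cor_journe} inside the leaves of $W^{s+uu}_f:=h(W^{s+uu}_L)$ (whose leaves are first shown to be smooth by Journ\'e applied to the subfoliations $W^s_f$ and $W^{uu}_f$) shows that $h$ restricted to each such leaf is a smooth diffeomorphism, so the linear, hence smooth, foliation $W^{uu}_L|_{W^{s+uu}_L(x)}$ is carried to $W^{uu}_f|_{W^{s+uu}_f(h(x))}$, giving smoothness of $W^{uu}_f$ inside $W^{s+uu}_f$. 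Finally the holonomy between two-dimensional transversals is smooth along the two transverse curve families cut out by $W^u_f$ and $W^{s+uu}_f$, hence smooth by Corollary~\ref{cor_journe}. To repair your proposal you would need to supply an argument of this kind for the stable direction; the intra-unstable estimate alone does not suffice.
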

\begin{proof}
In our proof we will use the following well-known fact: a foliation is smooth, that is, given by smooth charts, if and only if it has smooth leaves and its holonomy homeomorphisms are smooth.
\\
{\it Step 1.} Recall that $W^{uu}_f$ subfoliates $W^u_f$. By~\cite[Proposition 3.9]{KS2}, for each $x\in M$ the restriction of $W^{uu}_f$ to $W^u_f(x)$ is a uniformly smooth foliation.
\\
{\it Step 2.} 
Let $W^{s+uu}_L$ be the integral foliation for $E^s_L\oplus E^{uu}_L$. Define
$$
W^{s+uu}_f=h(W^{s+uu}_L).
$$
Clearly $W^{s+uu}_f$ is a continuous foliation with $C^{1+\e}$ leaves. Note that $W^s_f$ and $W^{uu}_f$ are continuous foliations with smooth leaves which subfoliate $W^{s+uu}_f$. Hence, by representing local leaves of $W^{s+uu}_f$ as graphs and applying Journ\'e's Lemma we conclude that $W^{s+uu}_f$ has, in fact, smooth leaves.
Now, by~(\ref{eq_s_uu_smooth}) and Corollary~\ref{cor_journe}, we obtain that the restrictions
$$
h|_{W_L^{s+uu}(x)}\colon W_L^{s+uu}(x)\to W_f^{s+uu}(h(x))
$$
are uniformly $C^\infty$ diffeomorphisms. Hence for each $x\in M$ the restriction of $W^{uu}_f$ to $W^{s+uu}_f(x)$ is a uniformly smooth foliation.
\\
{\it Step 3.} Let $T_1$ and $T_2$ be a pair of 2-dimensional transversals to $W_f^{uu}$ and let
$$
\pi^{uu}\colon T_1\to T_2
$$
be a holonomy along strong unstable foliation. Let
$$
W_i=T_i\cap W^u,\, V_i=T_i\cap W^{s+uu},\,\, i=1,2.
$$
Then by previous steps we have
$$
\pi^{uu}\in \Diff^\infty_{W_1,W_2}(T_1, T_2)\cap\Diff^\infty_{V_1,V_2}(T_1, T_2).
$$
By Corollary~\ref{cor_journe}, $\pi^{uu}$ is a $C^\infty$ diffeomorphism and, hence, $W_f^{uu}$ is a $C^\infty$ foliation on $\T^3$. 
\end{proof}

\subsection{Anosov factor dynamics and bootstrap of quotient conjugacy}
\label{sec_factor_dynamics}
The goal of this subsection is to introduce the quotient conjugacy $\bar h$ and to prove Lemma~\ref{lemma_hbar} below.
\subsubsection{Quotient conjugacy}
\label{sec_quotient_conjugacy}
The embedding $\R^2\subset\R^3$ given by $(x,y)\mapsto (x,y,z)$ induces an embedding $\T^2\subset\T^3$. Foliation $W^{uu}_L$ is transverse to $\T^2$ because it is a totally irrational foliation. Recall that $E^{uu}_f$ depends continuously on $f$. It follows that if $f$ is sufficiently $C^1$ close to $L$ the distribution $E^{uu}_f$ is also transverse to $\T^2$. Hence, provided that $f$ is sufficiently $C^1$ close to $L$, we  have that $W^{uu}_f$ is transverse to $\T^2$.

We define the quotient conjugacy $\bar h\colon\T^2\to\T^2$ by taking the composite map
$$
\T^2\stackrel{h}{\longrightarrow} h(\T^2)\stackrel{\pi^{uu}}{\longrightarrow}\T^2.
$$
Here $\pi^{uu}$ is the ``short" holonomy along $W^{uu}_f$. This holonomy is well defined because $h$ is close to $id_{\T^3}$ and, hence, the local leaves $W^{uu, loc}_f(x)$, $x\in h(\T^2)$, intersect $\T^2$ exactly once. This definition is schematically illustrated on Figure~\ref{fig1}. 

\begin{figure}[hb]
  \centering
  \includegraphics{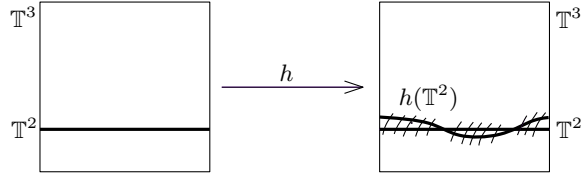}
  \caption
   {Definition of $\bar h$.}
   \label{fig1}
\end{figure}

By Lemma~\ref{lemma_strong_unstable}, $\bar h$ is as smooth as $h$, that is, $C^{1+\e}$. In fact, we have the following result.

\begin{lemma}
\label{lemma_hbar}
Quotient conjugacy $\bar h\colon \T^2\to\T^2$ is a $C^\infty$ diffeomorphism.
\end{lemma}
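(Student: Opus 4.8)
The plan is to realize $\bar h$ as a conjugacy between two Anosov diffeomorphisms of $\T^2$ and then apply a bootstrap argument of de la Llave--Marco--Moriyón type in dimension 2, where the obstruction to smoothness is controlled by a cohomological equation over a Diophantine translation. First I would identify the factor dynamics: since $W^{uu}_f$ is $f$-invariant and (by the previous subsection) a smooth foliation transverse to $\T^2$, the map $f$ descends through the ``short holonomy'' projection to a well-defined diffeomorphism $\bar f\colon\T^2\to\T^2$ (at least after noting that $\T^2$, being $W^{uu}_L$-transverse and hence $W^{uu}_f$-transverse, serves as a global transversal to the $W^{uu}_f$-foliation on $\T^3$; one uses that on the universal cover $\R^3$ the leaves of $\widetilde W^{uu}_f$ form a smooth fibration over $\R^2$ and that this fibration is equivariant under the relevant deck transformations). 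On the linear side, the quotient of $L$ by $W^{uu}_L$ is precisely the hyperbolic automorphism $\bar L$ of $\T^2$ with eigenvalues $\lambda_1,\lambda_2$. By construction of $\bar h$ as $\pi^{uu}\circ h$ restricted to $\T^2$, and because $h$ conjugates $f$ to $L$ and intertwines $W^{uu}_f$ with $W^{uu}_L$, one checks $\bar h\circ \bar L=\bar f\circ\bar h$, so $\bar h$ is a conjugacy between the Anosov diffeomorphism $\bar f$ and the hyperbolic automorphism $\bar L$ of $\T^2$. We already know $\bar h$ is $C^{1+\e}$ (from Lemma~\ref{lemma_strong_unstable} and the regularity of $h$), so in particular $\bar h$ intertwines the stable and unstable foliations of $\bar L$ and $\bar f$.

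The two one-dimensional invariant foliations of $\bar f$ on $\T^2$ are the images of $W^s_f$ and $W^{wu}_f$ under the projection — equivalently the stable/unstable foliations $W^s_{\bar f}$, $W^u_{\bar f}$. By Lemma~\ref{lemma_1d_bootstrap}, applied to the expanding foliation $W^u_{\bar f}$ for $\bar f$ and its inverse for $\bar f^{-1}$ (using that $\bar f$ is as smooth as $f$), the conjugacy $\bar h$ is automatically $C^\infty$ along both $W^s_{\bar L}$ and $W^u_{\bar L}$, i.e.\ $\bar h\in\Diff^\infty_{W^s_{\bar L},W^s_{\bar f}}(\T^2)\cap\Diff^\infty_{W^u_{\bar L},W^u_{\bar f}}(\T^2)$. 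By Corollary~\ref{cor_journe} (Journé's lemma), this forces $\bar h$ to be a $C^\infty$ diffeomorphism of $\T^2$, which is exactly the assertion. Thus the smoothness of $\bar h$ reduces entirely to the two facts: (i) $\bar f$ is a genuinely $C^r$ (indeed $C^\infty$) Anosov diffeomorphism of $\T^2$, and (ii) $\bar h$ is a homeomorphism conjugating it to the linear $\bar L$, both of which follow from the smoothness of $W^{uu}_f$ established in Lemma~\ref{lemma_strong_unstable}.

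The main obstacle I expect is step (i): verifying that $f$ really does descend to a well-defined and smooth diffeomorphism $\bar f$ of $\T^2$. This requires knowing that the strong unstable foliation $W^{uu}_f$, lifted to $\R^3$, is a \emph{smooth fibration} over a copy of $\R^2$ whose leaf space can be identified with $\T^2$ in a way compatible with the $\Z^3$-action — i.e.\ that passing to the quotient by $W^{uu}_f$ is a genuine smooth submersion onto a $2$-torus, not merely a local construction. The smoothness of the leaves together with smoothness of the holonomies (both from Lemma~\ref{lemma_strong_unstable}) gives that $W^{uu}_f$ is a smooth foliation; transversality to $\T^2$ and proximity of $h$ to the identity give that $\T^2$ is a global smooth transversal meeting every leaf exactly once in the relevant fundamental domain, so the projection $\T^3\to\T^2$ along $W^{uu}_f$ is a well-defined smooth map and $\bar f$ is its descent of $f$. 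Once this bookkeeping on the universal cover is in place, the conjugacy relation $\bar h\circ\bar L=\bar f\circ\bar h$ is a diagram chase using $h\circ L=f\circ h$ and $h(W^{uu}_L)=W^{uu}_f$, and the regularity conclusion is immediate from Lemma~\ref{lemma_1d_bootstrap} and Corollary~\ref{cor_journe} as above.
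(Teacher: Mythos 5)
Your reduction to Lemma~\ref{lemma_1d_bootstrap} and Corollary~\ref{cor_journe} is the right endgame, but the object you want to apply it to does not exist: there is no hyperbolic automorphism $\bar L$ of $\T^2$ with eigenvalues $\lambda_1,\lambda_2$, and $f$ does not descend to a diffeomorphism of $\T^2$. The characteristic polynomial of $L$ is irreducible over $\mathbb{Q}$ (a rational eigenvalue would have to be $\pm1$), so $\lambda_1$ and $\lambda_2$ have degree $3$ over $\mathbb{Q}$ and cannot be eigenvalues of a matrix in $GL(2,\Z)$; equivalently $\lambda_1\lambda_2=\pm\lambda_3^{-1}\neq\pm1$, so the putative $\bar L$ would not even preserve volume. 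Concretely, each leaf of $W^{uu}_f$ (and of $W^{uu}_L$) is dense in $\T^3$ and meets the transversal $\T^2$ infinitely many times, so there is no global projection $\T^3\to\T^2$ along $W^{uu}_f$; the ``short'' holonomy is only defined from $h(\T^2)$, which is $C^0$ close to $\T^2$, and cannot be used to push $f$ (whose image $f(\T^2)$ is a different subtorus) down to $\T^2$. This is exactly why the paper stresses that $\bar h$ is \emph{not} a conjugacy of Anosov diffeomorphisms; on the compact torus $\bar h$ only conjugates the first return maps of the $W^{uu}$-flows, i.e.\ a Diophantine translation $T$ to a diffeomorphism $R$, and that conjugacy is exploited later for the cohomological equation, not for the bootstrap of $\bar h$ itself.

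The salvageable version of your idea is the one the paper carries out: pass to the universal cover, where $\R^2\subset\R^3$ genuinely is a global transversal meeting each leaf of $\widetilde W^{uu}_f$ exactly once, and obtain induced maps $\hat L,\hat f\colon\R^2\to\R^2$ conjugated by a lift $\hat h$ of $\bar h$. These are Anosov diffeomorphisms of a non-compact plane which do not descend to $\T^2$, so Lemma~\ref{lemma_1d_bootstrap} can only be invoked after one proves that the nonlinear factor is \emph{uniformly} $C^\infty$ and uniformly Anosov; this is not automatic, because the strong unstable holonomies from $\widetilde W^{s+wu}_f(\tilde h(0))$ to $\R^2$ are over unbounded distances and a priori have unbounded derivatives. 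The paper circumvents this by replacing the transversal $\R^2$ with the invariant leaf $\widetilde W^{s+wu}_L(0)$, which stays a bounded distance from $\tilde h(\widetilde W^{s+wu}_L(0))$, proving uniform smoothness of the resulting factor $\hat f'$ (Lemma~\ref{lemma_hat_f_prime}), bootstrapping the conjugacy $\hat h'$ there, and only then transporting the conclusion back to $\hat h$ and $\bar h$ via the smooth $W^{uu}_f$-holonomy. Your proposal is missing both the obstruction to the compact quotient and this uniformity step.
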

Note that $\bar h$ is not a conjugacy of Anosov diffeomorphisms. The basic idea of the proof of Lemma~\ref{lemma_hbar} is to lift $\bar h$ to a diffeomorphism which is a conjugacy between two Anosov diffeomorpisms of non-compact surfaces and then proceed with a fairly standard argument for bootstrap of regularity in the non-compact setting.
To prove Lemma~\ref{lemma_hbar} we need to explain the diagram depicted on Figure~\ref{fig2}.

\begin{figure}[hb]
  \centering
  \includegraphics{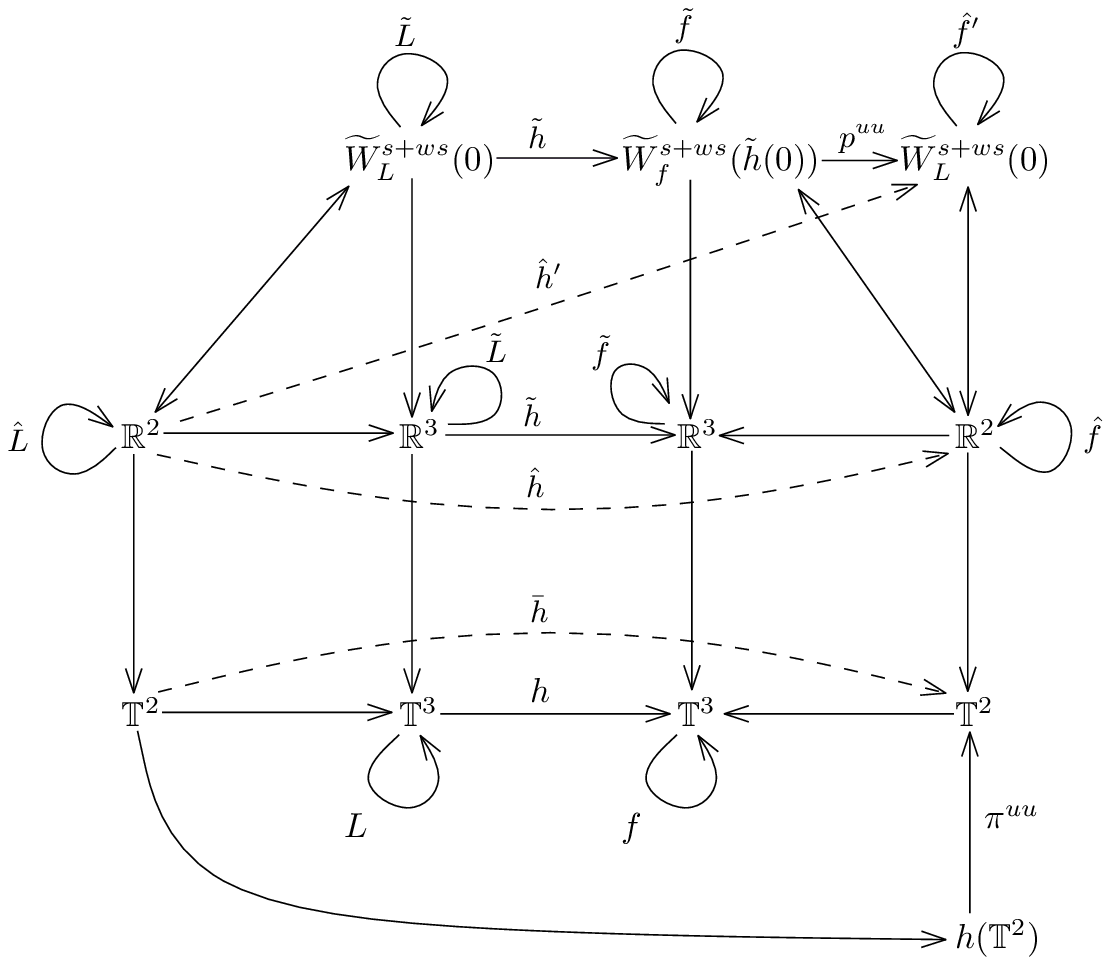}
  \caption
   {Diagram.}
   \label{fig2}
\end{figure}

\subsubsection{Lifts}
We pick lifts $\tilde L$, $\tilde f$ and $\tilde h$ of $L$, $f$ and $h$ to the universal cover $\R^3$ so that $\tilde L(0)=0$, $\tilde h$ is $C^0$ close to $id_{\R^3}$ and $\tilde h\circ \tilde L=\tilde f\circ\tilde h$.  Let $W^{s+wu}_L$ be the integral foliation for $E^s_L\oplus E^{wu}_L$ and let $W^{s+wu}_f=h(W^{s+wu}_L)$. We write $\widetilde W^{s+wu}_L$ and $\widetilde W^{s+wu}_f$ for the lifts of these foliations to $\R^3$. Note that by construction $\widetilde W^{s+wu}_L(0)$ is invariant under $\tilde L$ and $\widetilde W^{s+wu}_f(\tilde h(0))$ is invariant under $\tilde f$.

\subsubsection{Dynamics on the spaces of strong unstable leaves}
Note that given a 2-dimensional submanifold $\TT\subset\R^3$ which is a global transversal to $\widetilde W^{uu}_f$ we can identify $\TT$ with the space of strong unstable leaves via $\widetilde W^{uu}_f(x)\mapsto \widetilde W^{uu}_f(x)\cap \TT$. Further $\tilde f\colon \R^3\to\R^3$ induces a diffeomorphism of $\TT$. Note that the induced map is the composite of $\tilde f|_\TT$ and strong unstable holonomy
\begin{equation}
\label{eq_decomposition}
 \TT\stackrel{\tilde f}{\longrightarrow} \tilde f(\TT)\stackrel{uu-\mbox{holonomy}}{\xrightarrow{\hspace*{3cm}}}\TT.
\end{equation}
We apply this construction to the following three transversals --- $\R^2\subset \R^3$, $\widetilde W^{s+wu}_L(0)$ and $\widetilde W^{s+wu}_f(\tilde h(0))$ --- and obtain the induced diffeomorphisms as explained above:
$$
\hat f\colon\R^2\to\R^2,\;\; \hat f'\colon \widetilde W^{s+wu}_L(0)\to \widetilde W^{s+wu}_L(0),\;\; \tilde f\colon
\widetilde W^{s+wu}_f(\tilde h(0))\to \widetilde W^{s+wu}_f(\tilde h(0))
$$
Note the the last one is merely the restriction of $\tilde f\colon\R^3\to\R^3$ to $\widetilde W^{s+wu}_f(\tilde h(0))$ and, hence, we also denote it by $\tilde f$. These diffeomorphisms appear in the top-right corner of the diagram on Figure~\ref{fig2}. By construction~(\ref{eq_decomposition}) they are all conjugate via strong unstable holonomies, which form a commutative triangle in the top-right corner of the diagram on Figure~\ref{fig2}.

Fully analogous considerations apply to $\tilde L$, strong unstable foliation $\widetilde W^{uu}_L$ and transversals $\R^2$ and $\widetilde W^{s+wu}_L(0)$ which yield the diffeomorphisms
$\hat L\colon\R^2\to\R^2$ and $\tilde L\colon \widetilde W^{s+wu}_L(0)\to \widetilde W^{s+wu}_L(0)$. These two diffeomorphisms are conjugate via strong unstable holonomy along $\widetilde W^{uu}_L$ as indicated in the top-left corner of the diagram on Figure~\ref{fig2}.

\subsubsection{Lifts of the quotient conjugacy $\bar h$}

\label{sec_barh}
The lift $\tilde h\colon \R^3\to\R^3$ sends strong unstable leaves of $\tilde L$ to the strong unstable leaves of $\tilde f$ and, hence, induces a conjugacy $\hat h\colon\R^2\to\R^2$; \ie
$$
\hat h\circ\hat L=\hat f\circ\hat h.
$$
It is immediate from our definitions that $\hat h$ is a lift of $\bar h$ with respect to the covering map $\R^2\to\T^2$.
By composing $\hat h$ with holonomy along $\widetilde W^{uu}_f$ we obtain 
$\hat h'\colon\R^2\to\widetilde W^{s+wu}_L(0)$ which conjugates $\hat L$ and $\hat f'$. 

We have now fully explained the diagram on Figure~\ref{fig2}. Recall that by Lemma~\ref{lemma_strong_unstable} that holonomy along $\widetilde W^{uu}_f$  is smooth. Hence smoothness of $\hat h'$ implies smoothness of $\hat h$, which, in its turn, implies smoothness of $\bar h$.

\subsubsection{A uniformly smooth Anosov diffeomorphism of a non-compact surface} 
Let $p^{uu}\colon\widetilde W^{s+wu}_f(\tilde h(0))\to \widetilde W^{s+wu}_L(0)$ be the holonomy along $\widetilde W^{uu}_f$. Recall that
\begin{equation}
\label{eq_fhatprime}
\hat f'=p^{uu}\circ\tilde f|_{\widetilde W^{s+wu}_f(\tilde h(0))}\circ (p^{uu})^{-1},
\end{equation}
We equip both $\widetilde W^{s+wu}_L(0)$ and $\widetilde W^{s+wu}_f(\tilde h(0))$ with induced Riemannian metric.

\begin{lemma}
 \label{lemma_hat_f_prime}
 Diffeomorphism $\hat f'$ is a smooth Anosov diffeomorphism with uniformly bounded derivatives of all orders.
\end{lemma}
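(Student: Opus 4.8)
The plan is to realize $\hat f'$ as a composition of the restriction of $\tilde f$ to the affine plane $\widetilde W^{s+wu}_L(0)$ with a strong unstable holonomy, to verify that both factors are uniformly $C^\infty$ by keeping everything at bounded scale, and then to deduce the Anosov property from an invariant cone field once $\hat f'$ is recognized as a uniformly $C^1$-small perturbation of a linear hyperbolic automorphism of the plane.

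First I would rewrite $(\ref{eq_fhatprime})$ in a more convenient form. Since $\tilde f$ preserves the foliation $\widetilde W^{uu}_f$, for $x\in\widetilde W^{s+wu}_L(0)$ the point $\hat f'(x)$ is simply the intersection of $\widetilde W^{uu}_f(\tilde f(x))$ with $\widetilde W^{s+wu}_L(0)$; that is, $\hat f'=\pi\circ\bigl(\tilde f|_{\widetilde W^{s+wu}_L(0)}\bigr)$, where $\pi$ is the holonomy along $\widetilde W^{uu}_f$ from $\tilde f\bigl(\widetilde W^{s+wu}_L(0)\bigr)$ back to $\widetilde W^{s+wu}_L(0)$. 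Here one needs that $\widetilde W^{s+wu}_L(0)$, and likewise its $\tilde f$-image (which is $C^1$-close to it), is a global transversal to $\widetilde W^{uu}_f$: the leaves of $\widetilde W^{uu}_f$ are properly embedded curves whose tangent lines stay uniformly close to $E^{uu}_L$, a line complementary to the plane $E^s_L\oplus E^{wu}_L=\widetilde W^{s+wu}_L(0)$, so each such leaf meets the plane exactly once.

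The core of the argument is uniform smoothness. The diffeomorphism $\tilde f$ is a lift of a $C^\infty$ diffeomorphism of the compact torus $\T^3$, so $\tilde f$ and $\tilde f^{-1}$ have uniformly bounded derivatives of all orders on $\R^3$; restricting to the affine plane $\widetilde W^{s+wu}_L(0)$ gives a uniformly $C^\infty$ map with uniformly $C^\infty$ image. The key point is that $\widetilde W^{s+wu}_L(0)$ is $\tilde L$-invariant and $\tilde f-\tilde L$ is $\Z^3$-periodic (as $f$ and $L$ are homotopic), hence uniformly bounded, so $\tilde f\bigl(\widetilde W^{s+wu}_L(0)\bigr)$ stays within uniformly bounded Hausdorff distance of $\widetilde W^{s+wu}_L(0)$; together with the uniform transversality this forces the leaf segments of $\widetilde W^{uu}_f$ realizing $\pi$ to have uniformly bounded length. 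By Lemma~\ref{lemma_strong_unstable}, $W^{uu}_f$ is a $C^\infty$ foliation of $\T^3$, hence of uniformly bounded geometry by compactness, so its lift $\widetilde W^{uu}_f$ is a uniformly $C^\infty$ foliation of $\R^3$; and a holonomy of such a foliation between uniformly $C^\infty$, uniformly transverse transversals along uniformly bounded leaf segments is uniformly $C^\infty$ — cover each segment by a uniformly bounded number of uniformly sized flow boxes and compose the transition maps. Hence $\pi$ and $\pi^{-1}$ are uniformly $C^\infty$, and therefore so are $\hat f'=\pi\circ(\tilde f|_{\widetilde W^{s+wu}_L(0)})$ and its inverse, which is the desired uniform smoothness. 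I expect this step to be the main obstacle, and it is precisely why the $\tilde L$-invariant transversal $\widetilde W^{s+wu}_L(0)$ must be used in place of the coordinate plane $\R^2$: with a generic transversal the surfaces $\tilde f^{\pm n}(\cdot)$ drift off to infinity and the relevant holonomies run along leaf segments of unbounded length, where the uniform smoothness of $\widetilde W^{uu}_f$ affords no control.

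It remains to establish hyperbolicity. Running the same construction for $L$ produces the map $\tilde L|_{\widetilde W^{s+wu}_L(0)}$ — whose holonomy factor is the identity, since $\widetilde W^{s+wu}_L(0)$ is $\tilde L$-invariant — which is a linear hyperbolic automorphism of the flat plane $\widetilde W^{s+wu}_L(0)\cong\R^2$ with eigenvalues $\lambda_1\in(0,1)$ and $\lambda_2>1$. When $f$ is $C^1$-close to $L$, the map $\tilde f|_{\widetilde W^{s+wu}_L(0)}$ is uniformly $C^1$-close to $\tilde L|_{\widetilde W^{s+wu}_L(0)}$ and $\pi$ is uniformly $C^1$-close to the identity (because $\tilde f(\widetilde W^{s+wu}_L(0))$ is $C^0$-close to $\widetilde W^{s+wu}_L(0)$ and the tangent lines of $\widetilde W^{uu}_f$ are $C^0$-close to $E^{uu}_L$), so $\hat f'$ is a uniformly $C^1$-small perturbation of a linear hyperbolic automorphism of the flat plane. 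A standard invariant cone field argument — using the uniform bounds on the derivatives of $\hat f'$ and $(\hat f')^{-1}$ obtained above, and the fact that $\hat f'$ is a homeomorphism of $\widetilde W^{s+wu}_L(0)$ onto itself, being topologically conjugate to the linear model via $\hat h'$ — then yields a continuous $D\hat f'$-invariant splitting of $T\widetilde W^{s+wu}_L(0)$ with uniform hyperbolicity constants, completing the proof.
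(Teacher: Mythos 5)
Your proposal is correct, and the uniform-smoothness half coincides with the paper's argument: both write $\hat f'$ as $\tilde f|_{\widetilde W^{s+wu}_L(0)}$ followed by the $\widetilde W^{uu}_f$-holonomy back to $\widetilde W^{s+wu}_L(0)$ (your $\pi$ is the paper's $q^{uu}$), observe that the $\tilde L$-invariance of the transversal together with the boundedness of $\tilde f-\tilde L$ keeps the holonomy segments uniformly bounded, and then get uniform $C^\infty$ bounds from the compactness/bounded-geometry of the smooth foliation $W^{uu}_f$ — exactly the point you correctly identify as the reason for preferring $\widetilde W^{s+wu}_L(0)$ over $\R^2$. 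Where you diverge is the Anosov property. The paper does not use a cone field: it introduces the other holonomy $p^{uu}\colon\widetilde W^{s+wu}_f(\tilde h(0))\to\widetilde W^{s+wu}_L(0)$, proves it is uniformly $C^1$ by the same bounded-distance compactness argument (here the two surfaces are only a bounded, not small, distance apart, so more care is needed), and then reads off hyperbolicity from the identity $\hat f'=p^{uu}\circ\tilde f|_{\widetilde W^{s+wu}_f(\tilde h(0))}\circ(p^{uu})^{-1}$, since the restriction of $\tilde f$ to its invariant leaf is manifestly Anosov and uniform $C^1$ conjugacy transports the property. Your route instead treats $\hat f'$ directly as a uniformly $C^1$-small perturbation of the linear map $\mathrm{diag}(\lambda_1,\lambda_2)$ and invokes an invariant cone field; this is sound (all the required uniformity over the non-compact plane is available from periodicity of $\tilde f-\tilde L$ and the smallness of $\|f-L\|_{C^1}$), it avoids introducing $p^{uu}$ altogether, and it leans more heavily on the hypothesis that $f$ is $C^1$-close to $L$, whereas the paper's conjugacy argument would survive with only bounded (rather than small) $C^1$ distance. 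The one point you should make explicit in the cone-field step is that the tangent planes of $\tilde f(\widetilde W^{s+wu}_L(0))$ are uniformly close to $E^s_L\oplus E^{wu}_L$ (which follows from $D\tilde f\approx L$ and the $L$-invariance of that plane); $C^0$-closeness of the surfaces plus closeness of the $uu$-directions alone would not make $\pi$ $C^1$-close to the identity.
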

\begin{remark}
Note that diffeomorphism $\tilde f|_{\widetilde W^{s+wu}_f(\tilde h(0))}$ is clearly Anosov, however the surface $\widetilde W^{s+wu}_f(\tilde h(0))$ is merely $C^{1+\e}$, hence, it does not make sense to talk about higher regularity of $\tilde f|_{\widetilde W^{s+wu}_f(\tilde h(0))}$.
\end{remark}
\begin{proof}
The holonomy $p^{uu}$ is $C^{1+\e}$ diffeomorphism by Lemma~\ref{lemma_strong_unstable}. In fact, $p^{uu}$ is uniformly $C^1$, \ie first derivatives of $p^{uu}$ are uniformly bounded. 

 To check uniformity recall that $\widetilde W^{s+wu}_f(\tilde h(0))=\tilde h(\widetilde W^{s+wu}_L(0))$. It follows that $\widetilde W^{s+wu}_f(\tilde h(0))$ is bounded distance away from $\widetilde W^{s+wu}_L(0)$. And if we let $d^{uu}$ be the induced distance on the leaves of $\widetilde W^{uu}_f$ then 
 $$
 c=\sup_{x\in \widetilde W^{s+wu}_f(\tilde h(0))}d^{uu}(x,p^{uu}(x))
 $$
 is finite. We can view the holonomy $p^{uu}$ as being glued out of local holonomies 
$ \widetilde W^{s+wu,loc}_f(x)\to \widetilde W^{s+wu,loc}_L(y)$, with $d^{uu}(x,y)\le R$, where $x\in \widetilde W^{s+wu}_f(\tilde h(0))$ and 
$y=p^{uu}(x)=\widetilde W^{uu}_f(x)\cap \widetilde W^{s+wu}_L(0)$.
 Because these holonomies are between local leaves we can drop all tildes and conclude that each local holonomy of $p^{uu}$ belongs to a (strictly larger) family of holonomies
$$
\mathcal P=\{\,p^{uu}_{x,y}\colon W^{s+wu, loc}_f(x)\to W^{s+wu, loc}_L(y); \,\,\,d^{uu}(x,y)\le c\}
$$
Each local holonomy from $\mathcal P$ is uniformly smooth. Further, because foliations $W^{s+wu}_f$ and $W^{s+wu}_L$ are uniformly $C^{1+\e}$ and $W^{uu}_f$ is uniformly smooth by Lemma~\ref{lemma_strong_unstable}, we conclude that $p^{uu}_{x,y}$ vary continuously in $C^1$ topology with respect to $x\in\T^3$ and $y\in W^{uu}(x, c)$. Hence, by compactness, the holonomies from $\mathcal P$ are uniformly uniformly $C^1$. We conclude that $p^{uu}$ is uniformly $C^1$.
 
 Obviously, $\tilde f|_{\widetilde W^{s+wu}_f(\tilde h(0))}$ is Anosov. Therefore, by (\ref{eq_fhatprime}), $\hat f'$ is uniformly $C^1$ conjugate to an Anosov diffeomorphism and, hence, is Anosov.
 (To be more precise, we claim that $\hat f'$ is Anosov with respect to the induced Riemannian metric on $\widetilde W^{s+wu}_L(0)$. This is why $C^1$ uniformity of the conjugacy $p^{uu}$ is important here.)
 
 It remains to see that $\hat f'$ is uniformly smooth. For this we use the description of $\hat f'$ as the induced map on the space of strong unstable leaves. Namely recall that by~(\ref{eq_decomposition}) $\hat f'$ is the composition
 $$
 \hat f'\colon \widetilde W^{s+wu}_L(0)\stackrel{\tilde f}{\longrightarrow} \tilde f(\widetilde W^{s+wu}_L(0))\stackrel{q^{uu}}{\longrightarrow}\widetilde W^{s+wu}_L(0),
 $$
 where $q^{uu}$ is the holonomy along $\widetilde W^{uu}_f$. The first diffeomorphism of the composition is uniformly smooth. Because $\tilde f$ is uniformly close to $\tilde L$, the distance between $x\in \tilde f(\widetilde W^{s+wu}_L(0))$ and $q^{uu}(x)\in\widetilde W_L^{s+wu}(0)$ along $\widetilde W^{uu}_f(x)$ is uniformly bounded. 
 
Therefore we can apply the same argument, which we used to show that $p^{uu}$ is uniformly $C^1$, to the holonomy $q^{uu}$. Indeed, foliations $ \tilde f(\widetilde W^{s+wu}_L)$ and $\widetilde W_L^{s+wu}$ are uniformly $C^\infty$. Hence this argument yields uniform smoothness of $q^{uu}$. We conclude that $\hat f'$ is indeed uniformly smooth Anosov diffeomorphism.
\end{proof}
\begin{remark}
 The argument used to prove Lemma~\ref{lemma_hat_f_prime} does not work to establish the Anosov property and uniform smoothness of $\hat f$ because corresponding strong unstable holonomies between $\widetilde W^{s+wu}_f(\tilde h(0))$ and $\R^2$ are unbounded and thus, a priori, may have unbounded derivatives. This is the reason behind introducing and working with $\hat f'$.
\end{remark}

\subsubsection{Proof of Lemma~\ref{lemma_hbar}}
We have already explained in Subsection~\ref{sec_barh} that in order to prove Lemma~\ref{lemma_hbar} we only need to establish smoothness of $\hat h'$.

Recall that $\hat h'\circ\hat L=\hat f'\circ\hat h'$. By definition $\hat h'$ is $C^{1+\e}$.  Lemma~\ref{lemma_hat_f_prime} verifies the assumption of Lemma~\ref{lemma_1d_bootstrap}, which applies and yields smoothness of $\hat h'$ along the one dimensional expanding and contracting foliations. Therefore, by Corollary~\ref{cor_journe}, $\hat h'$ is a smooth diffeomorphism. \hfill$\square$

\subsection{Setting up the cohomological equation}
\label{sec_cohomological_equation}
\subsubsection{A different role of the quotient conjugacy $\bar h$: conjugacy of return maps $R$ and $T$}
Let $\bar h\colon\T^2\to\T^2$ be the quotient conjugacy defined in~\ref{sec_quotient_conjugacy}. We orient $W^{uu}_L$ and $W^{uu}_f$ so that $h$ preserves the orientation. Consider the flows along $W^{uu}_L$ and $W^{uu}_f$. By discussion in~\ref{sec_quotient_conjugacy}, $\T^2\subset\T^3$ is a transversal for these flows and, hence, we can consider first return maps $T\colon \T^2\to\T^2$ and $R\colon \T^2\to\T^2$, respectively. By Lemma~\ref{lemma_strong_unstable}, $R$ is a smooth diffeomorphism. Obviously, $T$ is a translation on $\T^2$. It is also evident from Figure~\ref{fig3} that $\bar h$ conjugates the return maps
\begin{equation}
\label{eq_hbar}
\bar h\circ T= R\circ\bar h
\end{equation}

\begin{figure}[hb]
  \centering
  \includegraphics{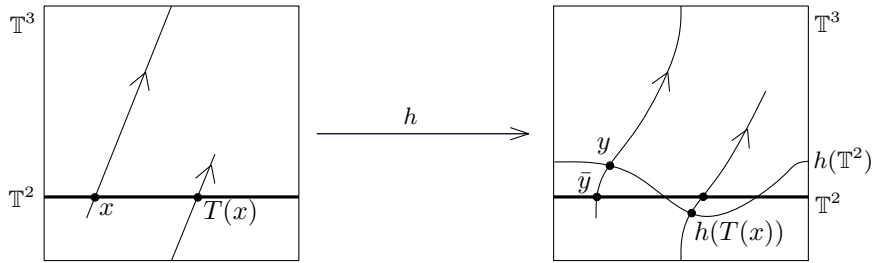}
  \caption
   {Diffeomorphism $\bar h$ conjugates the return maps. Here $y=h(x)$, $\bar y=\bar h(x)$ and the unlabeled point on the right is $\bar h(T(x))=R(\bar y)$.}
   \label{fig3}
\end{figure}

\subsubsection{The cohomological equation over $R$}
\label{sec_coh_over_R}
Let $g$ be the standard flat Riemannian metric on $\T^3$. We equip bundle $E^{uu}_f$ with the pull-back metric
\begin{equation}
 \label{eq_pullback_metric}
g^{uu}_f=(h^{-1})^*g|_{E^{uu}_L}
\end{equation}
and denote by $d^{uu}_f$ the induced metric on the leaves of $W^{uu}_f$. We also equip the transversal $\T^2\subset\T^3$ with the pull-back metric $(\bar h^{-1})^*g|_{\T^2}$. Note that the latter metric is smooth by Lemma~\ref{lemma_hbar}.

Let $W=\T^2\cap W^u_f$. We also pick an orientation for $W$. Note that by our choice of metric on $\T^2$, diffeomorphism $R\colon\T^2\to \T^2$ is an isometry. Therefore,
\begin{equation}
\label{eq_Rderivative}
\frac{\partial R}{\partial W}=1
\end{equation}

The leaves of $W^{wu}_f$ can be viewed as graphs over the leaves of $W$. Namely, for each $x_0\in\T^2$ pick a point $y_0\in W^{uu,loc}_f(x_0)$ and consider the holonomy
$$
\alpha_{x_0y_0} \colon W^{loc}(x_0)\to W^{wu, loc}_f(y_0),\, \alpha_{x_0y_0}(x_0)=y_0,
$$
along $W^{uu}_f$. Define
$$
\Phi_{x_0y_0}(x)=d_f^{uu}(x,\alpha_{x_0y_0}(x)),\, x\in W^{loc}(x_0).
$$
Then $W^{wu,loc}_f(y_0)$ is the graph of $\Phi_{x_0y_0}$ over $W^{loc}(x_0)$.

By our choice of metric on $E^{uu}_f$ the leaves of $W^{wu}_f$ are $d^{uu}_f$-equidistant (\ie $d^{uu}_f(x,y)$ only depends on the leaves $W^{wu}_f(x)$ and $W^{wu}_f(y)$ rather than particular points) within the leaves of $W^u_f$. It follows that 
\begin{equation}
\label{eq_small_phi}
\varphi(x)\stackrel{\textup{def}}{=}\frac{\Phi_{x_0y_0}(x)}{\partial W}, \,\,x\in W^{loc}(x_0)
\end{equation}
does not depend on the choice of $x_0$ and $y_0$. It is easy to see that $\varphi\colon\T^2\to\T^2$ is H\"older continuous. Also, as shown on Figure~\ref{fig4}, the fact that the weak unstable leaves are $d^{uu}_f$-equidistant implies that
\begin{equation}
\label{eq_Phi}
\Phi_{R(x_0)y_0'}(R(x))=\Phi_{x_0y_0}(x)-A(x)+const(y_0,y_0'),
\end{equation}
where
\begin{equation}
\label{eq_A}
A(x)\stackrel{\textup{def}}{=}d^{uu}_f(x,R(x)).
\end{equation}

\begin{figure}[hb]
  \centering
  \includegraphics{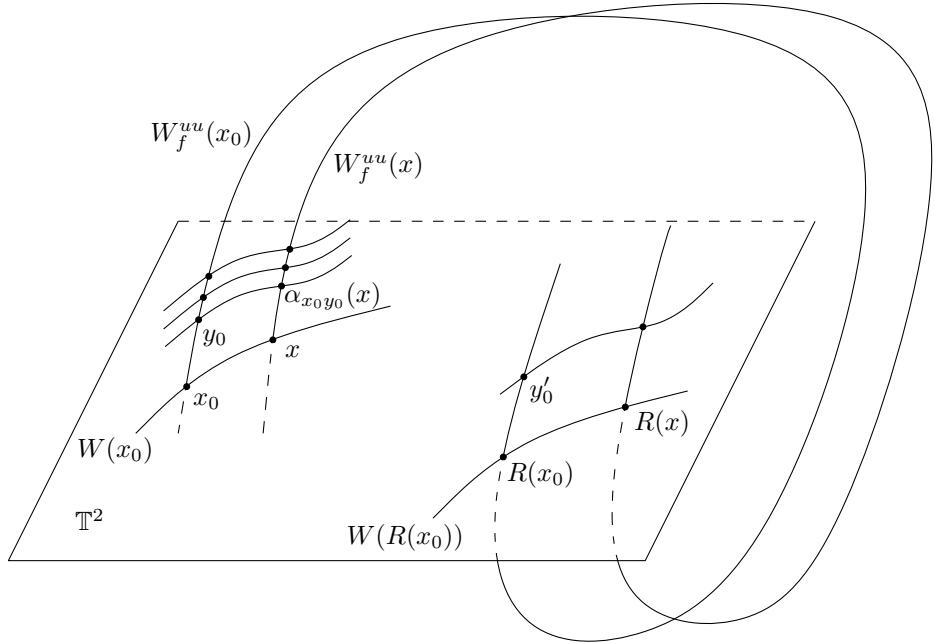}
  \caption
   {Proof of~(\ref{eq_Phi}). Equidistant weak unstable manifolds through $W^{uu}_f(x_0)$ are indicated.}
   \label{fig4}
\end{figure}

Differentiating~(\ref{eq_Phi}) along $W$ and using~(\ref{eq_Rderivative}) yields the cohomological equation
\begin{equation}
\label{eq_cohom_x}
\varphi(x)-\varphi(R(x))=a(x),
\end{equation}
where
\begin{equation}
\label{eq_a_small}
a(x)\stackrel{\textup{def}}{=}\frac{\partial A(x)}{\partial W}
\end{equation}

\subsection{Proof of the Main Theorem}
We will show that function $\varphi$ defined by~(\ref{eq_small_phi}) is smooth. After that we will deduce that $W^{wu}_f$ is a continuous foliation with smooth leaves. In fact, we will show that $W^{wu}_f$ is a smooth foliation. As was already explained in~\ref{sec_gg} this would complete the proof of the Main Theorem.

\begin{lemma}
 \label{lemma_pullback_smooth}
 The pull-back metric on the strong unstable distribution given by~(\ref{eq_pullback_metric}) is a smooth metric.
\end{lemma}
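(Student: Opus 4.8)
The plan is to reduce Lemma~\ref{lemma_pullback_smooth} to a regularity statement for a cohomological equation over the Anosov diffeomorphism $f$ itself, and then to apply the standard Liv\v sic-type bootstrap (smoothness along the stable and unstable foliations, followed by Journ\'e's Lemma). First I would note that by Lemma~\ref{lemma_strong_unstable} the line bundle $E^{uu}_f=TW^{uu}_f$ is smooth, hence trivial over $\T^3$, so one may fix a smooth section $e$ of $E^{uu}_f$ with $g(e,e)\equiv1$. Since $h$ carries $W^{uu}_L$ onto $W^{uu}_f$, the vector $Dh^{-1}_z(e(z))$ lies in $E^{uu}_L$ for every $z$, and the pull-back metric~(\ref{eq_pullback_metric}) is smooth if and only if the positive function
$$
\mu(z)\stackrel{\textup{def}}{=}g^{uu}_f\bigl(e(z),e(z)\bigr)=\bigl\|Dh^{-1}_z(e(z))\bigr\|^2
$$
is smooth, where $\|\cdot\|$ denotes the norm of the flat metric $g$. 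Observe that $\mu$ is H\"older continuous because $h$, and hence $h^{-1}$, is a $C^{1+\e}$ diffeomorphism.

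Next I would derive a cohomological equation for $u=\log\mu$. Write $j\stackrel{\textup{def}}{=}D_{W^{uu}_f}f$ for the Jacobian of $Df|_{E^{uu}_f}$; since $E^{uu}_f$ is a smooth subbundle and $f$ is smooth, $j\in C^\infty(\T^3)$. Using $Df_z(e(z))=j(z)\,e(f(z))$, the conjugacy $h^{-1}\circ f=L\circ h^{-1}$, and the fact that $Dh^{-1}_z(e(z))$ lies in the $\lambda_3$-eigenspace $E^{uu}_L$ of $DL$, a short computation gives
$$
Dh^{-1}_{f(z)}\bigl(e(f(z))\bigr)=\frac1{j(z)}\,DL\Bigl(Dh^{-1}_z\bigl(e(z)\bigr)\Bigr)=\frac{\lambda_3}{j(z)}\,Dh^{-1}_z\bigl(e(z)\bigr),
$$
whence $\mu\circ f=(\lambda_3/j)^2\mu$, that is,
$$
u\circ f-u=b,\qquad b\stackrel{\textup{def}}{=}2\log\lambda_3-2\log j,
$$
a cohomological equation over $f$ with $u$ continuous and right-hand side $b\in C^\infty(\T^3)$.

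It then remains to show that a continuous solution of such an equation over the $C^\infty$ Anosov diffeomorphism $f$ is automatically $C^\infty$. Telescoping forward along stable leaves, for $y\in W^s_f(x)$ one has $u(x)-u(y)=-\sum_{n\ge0}\bigl(b(f^n x)-b(f^n y)\bigr)$, and this series, together with all of its leafwise derivatives, converges uniformly since $b$ is smooth and $f$ uniformly contracts $W^s_f$ in every $C^k$ norm; hence $u\in C^\infty_{W^s_f}(\T^3)$. Telescoping backward along unstable leaves, for $y\in W^u_f(x)$ one has $u(x)-u(y)=\sum_{n\ge1}\bigl(b(f^{-n}x)-b(f^{-n}y)\bigr)$, and the same argument applied to $f^{-1}|_{W^u_f}$ gives $u\in C^\infty_{W^u_f}(\T^3)$. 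Since $W^s_f$ and $W^u_f$ are mutually transverse uniformly continuous foliations with $C^\infty$ leaves, Journ\'e's Lemma (Lemma~\ref{lemma_journe}) yields $u\in C^\infty(\T^3)$, so that $\mu$, and therefore the pull-back metric~(\ref{eq_pullback_metric}), is smooth.

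I expect the only genuinely technical point to be the $C^\infty$-convergence of the backward telescoping sum along the two-dimensional foliation $W^u_f$: it depends on the uniform geometric decay of the higher-order derivatives of $f^{-n}|_{W^u_f}$, which is the usual higher-order bounded-distortion estimate for $C^\infty$ Anosov systems, closely related to the input behind Lemma~\ref{lemma_density}. Unlike the cohomological equations over Diophantine torus translations used later in the paper, this equation is over the Anosov map $f$, so no arithmetic hypothesis enters and there is no loss of derivatives; alternatively, this last step may simply be quoted as the Liv\v sic regularity theorem for cohomological equations over Anosov diffeomorphisms.
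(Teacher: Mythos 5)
Your proposal is correct and follows essentially the same route as the paper: both reduce the lemma to smoothness of the strong unstable Jacobian of $h^{-1}$ (your $\mu$ is just the square of the paper's $\xi=D^{uu}h^{-1}$), derive the identical cohomological equation over $f$ with smooth right-hand side from the conjugacy relation and the eigenvalue $\lambda_3$, and conclude by Liv\v sic regularity over Anosov diffeomorphisms --- which the paper simply cites from~\cite{LMM}, while you additionally sketch the standard telescoping-plus-Journ\'e argument behind it.
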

We prove this lemma at the end of the current section.
\begin{cor}
 \label{cor_a_smooth}
 Functions $A\colon\T^2\to\R$ and $a\colon\T^2\to\R$ given by~(\ref{eq_A}) and~(\ref{eq_a_small}), respectively, are smooth.
\end{cor}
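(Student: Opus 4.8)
The plan is to realize $A$ as the integral of a smooth density over arcs of $W^{uu}_f$ whose endpoints depend smoothly on the base point, and then to obtain $a$ by differentiating $A$ along the (smooth) foliation $W$.

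First I would fix a smooth flow generating $W^{uu}_f$. By Lemma~\ref{lemma_strong_unstable} the foliation $W^{uu}_f$ is smooth, and since it has been oriented, its tangent line field admits a smooth nonvanishing section $X$ on the compact manifold $\T^3$; let $\{\psi_t\}$ be the (complete, smooth) flow of $X$. Since $E^{uu}_f$ is transverse to $\T^2$, the first--return time $\tau\colon\T^2\to(0,\infty)$, defined by $\psi_{\tau(x)}(x)\in\T^2$ and $\psi_t(x)\notin\T^2$ for $0<t<\tau(x)$, is smooth: applying the implicit function theorem to $(t,x)\mapsto u(\psi_t(x))$, where $u$ is a smooth local defining function of $\T^2\subset\T^3$, the derivative in $t$ is nonzero at $t=\tau(x)$ precisely because of transversality. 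Moreover $R(x)=\psi_{\tau(x)}(x)$.

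Next, $A(x)=d^{uu}_f(x,R(x))$ is exactly the $g^{uu}_f$--length of the arc $\{\psi_t(x):0\le t\le\tau(x)\}$ of $W^{uu}_f(x)$, so
\begin{equation*}
A(x)=\int_0^{\tau(x)}\bigl\|X(\psi_t(x))\bigr\|_{g^{uu}_f}\,dt .
\end{equation*}
By Lemma~\ref{lemma_pullback_smooth} the metric $g^{uu}_f$ is smooth, so the integrand is a smooth function of $(t,x)$; together with the smoothness of the upper limit $\tau$, the standard results on parameter integrals with smoothly varying limits give $A\in C^\infty(\T^2)$.

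Finally I would pass to $a=\partial A/\partial W$. Recall from Subsection~\ref{sec_quotient_conjugacy} that $\bar h=\pi^{uu}\circ h|_{\T^2}$; since $h$ carries $W^u_L$ onto $W^u_f$ and the short holonomy $\pi^{uu}$ moves points inside leaves of $W^u_f$, the map $\bar h$ carries the linear foliation $\T^2\cap W^u_L$ onto $W=\T^2\cap W^u_f$. Hence, being the image under the $C^\infty$ diffeomorphism $\bar h$ (Lemma~\ref{lemma_hbar}) of a smooth foliation, $W$ is a smooth one--dimensional foliation of $\T^2$, and the metric $(\bar h^{-1})^*g|_{\T^2}$ on $\T^2$ is smooth; choosing a smooth unit (for this metric) vector field $Y$ tangent to $W$, we get $a=\partial A/\partial W=Y(A)\in C^\infty(\T^2)$. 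The one point that genuinely needs care is the smoothness of $\tau$ --- that is, the compatibility of the smooth structure furnished by Lemma~\ref{lemma_strong_unstable} with the transversal $\T^2$ --- since the rest is a routine combination of Lemmas~\ref{lemma_pullback_smooth} and~\ref{lemma_hbar}.
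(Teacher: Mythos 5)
Your proof is correct and follows essentially the same route as the paper, which deduces smoothness of $A$ from the smoothness of $W^{uu}_f$ (Lemma~\ref{lemma_strong_unstable}) together with Lemma~\ref{lemma_pullback_smooth}, and smoothness of $a$ from the smoothness of the foliation $W$ via Lemma~\ref{lemma_hbar}. You have merely spelled out the details (return-time smoothness, the arc-length integral, the unit field along $W$) that the paper leaves implicit.
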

\begin{proof}
 Indeed, smoothness of $A$ is immediate from smoothness of strong unstable foliation and the above lemma. The derivative $a$ is also smooth because $W$ is a smooth foliation (which follows from Lemma~\ref{lemma_hbar}).
\end{proof}

We rewrite cohomological equation~(\ref{eq_cohomological}) using~(\ref{eq_hbar}):
$$
\varphi(\bar h(y))-\varphi(\bar h(T(y)))=a(\bar h(y)).
$$
By letting $\bar\varphi=\varphi\circ\bar h$ and $\bar a=a\circ\bar h$ we obtain
\begin{equation}
 \label{eq_cohom_y}
\bar\varphi(y)-\bar\varphi(T(y))=\bar a(y).
\end{equation}
Function $\bar a$ is smooth by Lemma~\ref{lemma_hbar} and Corollary~\ref{cor_a_smooth}.

Now recall that $T$ is a translation
$$
y\mapsto y+(\beta_1,\beta_2) \mod \Z^2,
$$
where $(\beta_1,\beta_2,1)$ is the eigenvector of $L$ for the strong unstable eigenvalue $\lambda_3$. Because $\lambda_3$ is a root of an irreducible integral polynomial of degree 3, it follows from the generalized Liouville's Theorem (see, \eg~\cite[VI, Lemma~1A]{Sch}) that $(\beta_1,\beta_2)$ is a Diophantine vector in the sense of~(\ref{eq_diophantine}). Hence $\bar\varphi\colon\T^2\to\R$ is smooth by the regularity result discussed in Section~\ref{sec_herman}. Because $\bar h$ is smooth we conclude that $\varphi$ is also smooth.

Finally, recall that, by discussion in~\ref{sec_coh_over_R}, the leaves of $W^{wu}_f$ are graphs of the ``antiderivative of $\varphi$.'' Together with Lemma~\ref{lemma_pullback_smooth} this implies that $W^{wu}_f$ is a smooth foliation. Thus to finish the proof of the Main Theorem it remains to establish Lemma~\ref{lemma_pullback_smooth}.
\begin{proof}[Proof of Lemma~\ref{lemma_pullback_smooth}]
 We denote by $dy^2$  the restriction of the standard flat Riemannian metric $g$ to $E^{uu}_f$. By definition
$$
g^{uu}_f(y)=(D^{uu}h^{-1}(y))dy^2,
$$
where $D^{uu}h^{-1}(y)$ is the Jacobian of the restriction of the differential $Dh^{-1}$ to $E^{uu}_f(y)$. Recall that by Lemma~\ref{lemma_strong_unstable} $W^{uu}_f$ is smooth. Hence to prove the lemma it is sufficient to show that $D^{uu}h^{-1}$ is smooth.

Let $\xi=D^{uu}h^{-1}$. Differentiating the conjugacy equation
$$
L\circ h^{-1}=h^{-1}\circ f
$$
along strong unstable distribution yields
$$
D^{uu}L(h^{-1}(y)) \xi(y)=\xi(f(y)) D^{uu}f(y).
$$
By taking the logarithms and recalling that $\lambda_3$ is the strong unstable eigenvalue of $L$ we obtain
$$
\log\lambda_3+\log(\xi(y))=\log(\xi(fy))+\log D^{uu}f(y)
$$
or
$$
\log(\xi(fy))-\log(\xi(y))=\log D^{uu}f(y)-\log\lambda_3.
$$
We arrived at a cohomological equation over $f$ with smooth coboundary on the right. Hence, by regularity theory for cohomological equations over Anosov diffeomorphisms~\cite{LMM}, we obtain that $\log(\xi)$ and, hence, $\xi$ are smooth. 
\end{proof}

\subsection{Sketch of the proof of Addendum~\ref{add}}
As before we can assume that the spectrum of $L$ satisfies~(\ref{eq_lambdas}). Because the Galois group of the characteristic polynomial of $L$ acts transitively on the roots $\{\lambda_1,\lambda_2,\lambda_3\}$ the number $\log\lambda_3/\log\lambda_2$ is not an integer. Let
\begin{equation}
\label{eq_kappa}
\kappa=\left\lfloor\frac{\log\lambda_3}{\log\lambda_2}\right\rfloor.
\end{equation}
Gogolev-Guysinsky result described in~\ref{sec_gg}, in fact, yields $C^{r-\e}$ regularity of the conjugacy $h$ in the case when $r\le\kappa$. Thus it remains to consider the case $r\ge\kappa+3$. In this case Gogolev-Guysinsky result only yields $C^{\kappa+\delta}$ regularity of $h$, where $\delta>0$ is a small constant. (For this it is important that  (because $\kappa$ is not an integer) we have $\kappa<\log\lambda_3/\log\lambda_2$.) However, it is straightforward to verify that following the arguments of Section~\ref{sec_3} only ``$\e$-loss'' of regularity occurs up to the point when we need to solve cohomological equation~(\ref{eq_cohom_y}). Indeed, function $\bar a\colon \T^2\to\R$ is only $C^{r-1-\e}$ and, hence,~\ref{sec_herman} only yields $C^{r-3-\e}$ smoothness of $\bar \varphi$ (and hence $\varphi$). Consequently, $W^{wu}_f$ is a $C^{r-2-\e}$ foliation. Note that $r-2-\e\ge \kappa+1-\e$. Therefore the conjugacy $h$ is a $C^{\kappa+1-\e}$ diffeomorphism. This regularity exceeds de la Llave's critical threshold, \ie
$$
\kappa+1-\e>\frac{\log\lambda_3}{\log\lambda_2}.
$$
Hence Theorem~6.1 of~\cite{L1} applies and yields $C^{r-\e}$ regularity for $h$.

\section{Analytic local rigidity}

Here we explain how to obtain analytic local rigidity stated in Addendum~\ref{add_analytic}. First we need to recall some background on invariant local manifolds for the intermediate  distribution.

\subsection{Invariant families of local manifolds for the intermediate distribution}
Let $L\colon\T^3\to\T^3$ be an automorphism whose spectrum is given by~(\ref{eq_lambdas}) and let $f$ be a sufficiently $C^1$ small perturbation of $L$. Recall that by the discussion in~\ref{sec_survival_3d} there exists a $Df$-invariant splitting $E^s_f\oplus E^{wu}_f\oplus E^{uu}_f$. Next theorem summarizes slow local invariant manifold theory in our restricted setting. We refer the reader to~\cite{P, JPL, CFL} for general statements, discussion and proofs. Note that because $\log\lambda_3/\log\lambda_2$ is not an integer the non-resonance conditions needed for the general result hold automatically for sufficiently $C^1$ small perturbations $f$. 
\begin{theorem}[\cite{P}]
\label{thm_p}
Let $L$ and $f$ be as above and let $\kappa$ be the critical regularity given by~\textup{(\ref{eq_kappa})}. Then there exist a family of local manifolds $\{ V_f^{wu}(x), x\in\T^3\}$ such that
\begin{enumerate}
\item $V^{wu}_f(x)$, $x\in\T^3$, are uniformly smooth;
\item $T_xV^{wu}_f(x)=E^{wu}_f(x)$, $x\in\T^3$;
\item $f^{-1}(V^{wu}_f(x))\subset V^{wu}_f(f^{-1}(x))$.
\end{enumerate}
Furthermore, family $\{ V_f^{wu}(x), x\in\T^3\}$ is the only family of uniformly $C^{\kappa+1}$ local manifolds which satisfy 1 and 2 above.
\end{theorem}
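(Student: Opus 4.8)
The plan is to realize $\{V^{wu}_f(x)\}$ as the family of local \emph{slow} (weak) stable manifolds of $g:=f^{-1}$, built by a graph transform whose regularity is then bootstrapped, the non-resonance ``$\log\lambda_3/\log\lambda_2\notin\Z$'' (noted before the theorem) being the crucial input. Since $Df$ preserves $E^s_f\oplus E^{wu}_f\oplus E^{uu}_f$, so does $Dg$, and inside each leaf $W^u_f(x)=W^s_g(x)$ the map $g$ contracts $E^{wu}_f$ by a factor $\approx\lambda_2^{-1}$ and $E^{uu}_f$ by the strictly smaller factor $\approx\lambda_3^{-1}$, while $W^{uu}_f$ subfoliates $W^u_f(x)$ smoothly by Lemma~\ref{lemma_strong_unstable}. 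Fix, for each $x$, a uniformly smooth chart $\psi_x\colon W^{u,loc}_f(x)\to E^{wu}_f(x)\times E^{uu}_f(x)$ centered at $x$, with horizontal axis tangent to $E^{wu}_f(x)$ and vertical fibres the local $W^{uu}_f$-leaves (these charts vary only H\"older-continuously in $x$, which is harmless as only uniform smoothness of each individual $V^{wu}_f(x)$ is claimed); in these coordinates $g$ is triangular, $(\xi,\eta)\mapsto(\Xi_x(\xi),H_x(\xi,\eta))$, with $\Xi_x'(0)=A_x$, $\partial_\eta H_x(0,0)=B_x$, $\partial_\xi H_x(0,0)=0$, $\|A_x^{-1}\|\approx\lambda_2$, $\|B_x\|\approx\lambda_3^{-1}$, uniformly. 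On the space of uniformly bounded, uniformly Lipschitz families $\gamma=\{\gamma_x\}$ of maps from an $r$-disk in $E^{wu}_f(x)$ into $E^{uu}_f(x)$ with $\gamma_x(0)=0$, define the graph transform $\mathcal G$ by $\mathrm{graph}\,(\mathcal G\gamma)_{gx}=g(\mathrm{graph}\,\gamma_x)$ (truncated to radius $r$); explicitly $(\mathcal G\gamma)_{gx}=H_x(\cdot,\gamma_x(\cdot))\circ\Xi_x^{-1}$. A routine estimate shows $\mathcal G$ preserves this space and is a $C^0$-contraction with factor $\approx\lambda_3^{-1}<1$, hence has a unique fixed point $\gamma^*$; set $V^{wu}_f(x)=\psi_x^{-1}(\mathrm{graph}\,\gamma^*_x)$, which satisfies (3) by construction. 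Differentiating the fixed-point relation once gives $D\gamma^*_{gx}(0)=B_xD\gamma^*_x(0)A_x^{-1}$, a contraction with factor $\approx\lambda_2/\lambda_3<1$, so $D\gamma^*_x(0)\equiv0$ and (2) holds; differentiating $k$ times, the induced transform on $k$-th derivatives contracts with factor $\approx\|B_x\|\,\|A_x^{-1}\|^k\approx\lambda_2^k/\lambda_3$, which is $<1$ exactly for $k\le\kappa$. The standard fibre-contraction ($C^r$-section) argument thus gives that $\gamma^*_x$ is uniformly $C^\kappa$.

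For $k\ge\kappa+1$ the factor $\lambda_2^k/\lambda_3$ exceeds $1$ and the graph transform no longer contracts in $C^k$. Instead, differentiating the invariance relation $k$ times yields an equation expressing the would-be $k$-th derivative at $x$ through its value at $gx$ with leading operator $B_x^{-1}(\,\cdot\,)A_x^{\otimes k}$ of norm $\approx\lambda_3/\lambda_2^k<1$, the remaining terms being continuous once $\gamma^*$ is known to be $C^{k-1}$. This exhibits the candidate $k$-th derivative as the unique continuous invariant section of an auxiliary bundle (over the $(k-1)$-jet graph of $V^{wu}_f$) on which the induced map is a fibre contraction, and the $C^r$-section theorem promotes it to genuine uniform $C^k$ regularity of $\gamma^*_x$; inducting on $k$ (every factor $\lambda_3/\lambda_2^k<1$ for $k\ge\kappa+1$) gives $C^\infty$, i.e.\ (1). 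The scheme can only stall at a borderline $\lambda_3=\lambda_2^k$, i.e.\ $\log\lambda_3/\log\lambda_2\in\Z$, which is excluded; for $f$ sufficiently $C^1$-close to $L$ the corresponding strict inequalities persist. (In the setting of this paper one may alternatively take $V^{wu}_f=W^{wu}_f$, already smooth by the Main Theorem; but the construction above is the one that, run with analytic graph transforms and Cauchy estimates, produces analytic $V^{wu}_f(x)$ when $f$ is analytic, which is what Addendum~\ref{add_analytic} needs.)

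For uniqueness, let $\{\hat V_x=\psi_x^{-1}(\mathrm{graph}\,\hat\gamma_x)\}$ be any family of uniformly $C^{\kappa+1}$ local manifolds satisfying (1)--(3); then $\hat\gamma$ is a fixed point of $\mathcal G$ with $\hat\gamma_x(0)=0$, $D\hat\gamma_x(0)=0$. Differentiating the fixed-point relation $j$ times at $0$ for $2\le j\le\kappa$ gives a recursion for $D^j\hat\gamma_x(0)$ with contraction factor $\approx\lambda_2^j/\lambda_3<1$; forward iteration along the orbit shows this recursion has a unique bounded solution, so $D^j\hat\gamma_x(0)=D^j\gamma^*_x(0)$. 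Hence $\hat\gamma_x$ and $\gamma^*_x$ share their degree-$\kappa$ Taylor polynomial at $0$, and since both are uniformly $C^{\kappa+1}$, $|\hat\gamma_x(\xi)-\gamma^*_x(\xi)|\le C|\xi|^{\kappa+1}$ with $C$ uniform. Applying $g^n=f^{-n}$ (which contracts the horizontal variable by $\approx\lambda_2^{-n}$ and the vertical by $\approx\lambda_3^{-n}$, with bounded distortion) and using $g$-invariance of both families, the vertical gap between $\hat V_x$ and $V^{wu}_f(x)$ at a fixed $\xi$ equals $\approx\lambda_3^n$ times the gap at $g^nx$ and horizontal coordinate $\approx\lambda_2^{-n}\xi$, hence is $\lesssim C(\lambda_3/\lambda_2^{\kappa+1})^n|\xi|^{\kappa+1}\to0$; thus $\hat V_x=V^{wu}_f(x)$.

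I expect the main obstacle to be the bootstrap past the critical order $\kappa$: below it the graph transform does everything, but above it one must recognize the higher derivatives as invariant sections of a \emph{different} auxiliary bundle on which a suitable contraction acts, and then invoke a $C^r$-section/Journ\'e-type regularity lemma — and it is precisely the non-resonance $\log\lambda_3/\log\lambda_2\notin\Z$ that keeps every such contraction strictly contracting, the same input that forces the intermediate jets to agree in the uniqueness argument.
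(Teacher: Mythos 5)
First, note that the paper does not prove Theorem~\ref{thm_p}: it is quoted from Pesin~\cite{P} (with~\cite{JPL, CFL} for discussion and proofs), so your proposal is being measured against the standard proofs in those references rather than against an in-paper argument. Your overall architecture --- work inside $W^u_f(x)$ with $g=f^{-1}$, triangularize using the smooth $W^{uu}_f$-subfoliation of the unstable leaves (for this only the leafwise smoothness \cite[Prop.~3.9]{KS2} is needed, not all of Lemma~\ref{lemma_strong_unstable}), analyze jets order by order, and use the non-resonance $\log\lambda_3/\log\lambda_2\notin\Z$ to flip the direction of contraction at order $\kappa+1$ --- is the right one, and your uniqueness argument (matching jets up to order $\kappa$ by the contracting recursions, then the weighted estimate $(\lambda_3/\lambda_2^{\kappa+1})^n|\xi|^{\kappa+1}\to 0$) is essentially the correct and standard one, modulo the small point that you should first argue that an invariant family of uniformly-sized local manifolds tangent to $E^{wu}_f\subset E^u_f$ is automatically contained in the unstable leaves (otherwise $\hat V_x$ need not be a graph in your charts), and that invariance (property 3) must be included among the hypotheses of the uniqueness clause.

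The genuine gap is in your existence step. The map $\mathcal G$ defined by $\mathrm{graph}\,(\mathcal G\gamma)_{gx}=g(\mathrm{graph}\,\gamma_x)$ is not a self-map of the space of graphs over $r$-disks: since $g=f^{-1}$ \emph{contracts} the horizontal direction by $\approx\lambda_2^{-1}$, the set $g(\mathrm{graph}\,\gamma_x)$ is a graph over a disk of radius $\approx r/\lambda_2<r$, so ``truncated to radius $r$'' is vacuous and $(\mathcal G\gamma)_{gx}$ is simply undefined on most of the $r$-disk; iterating shrinks the domain to a point. Using $f$ instead fixes the domain but makes the fiber map an expansion by $\approx\lambda_3$. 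This is not a repairable slip of bookkeeping: a graph transform is a $C^0$-contraction on local graphs only when the base direction is expanded and the fiber direction is contracted by the same map, and for the weak (non-dominated, slow) direction no iterate of $f$ or $f^{-1}$ does both --- this is exactly why integrability of $E^{wu}_f$ requires the pseudostable manifold theorem (Irwin, \cite{LW}) or the parameterization method \cite{CFL} rather than the Hadamard graph transform. Your claim would also prove too much: a unique fixed point among Lipschitz graphs would give uniqueness in the Lipschitz class, whereas for the model $(\xi,\eta)\mapsto(\lambda_2\xi,\lambda_3\eta)$ the curves $\eta=c\,\mathrm{sgn}(\xi)|\xi|^{\log\lambda_3/\log\lambda_2}$ are invariant, tangent to the slow axis, and $C^\kappa$ but not $C^{\kappa+1}$ --- which is precisely why the theorem's uniqueness clause carries the $C^{\kappa+1}$ hypothesis. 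To close the gap you must either import the Lipschitz/$C^1$ existence from \cite{LW} (or from \cite{GG}, where $W^{wu}_f$ is already produced) and run your jet analysis on top of it, or construct $\gamma^*$ directly as a fixed point in a space of graphs vanishing to order $\kappa+1$ at the origin (after solving for the formal $\kappa$-jet), where the weight $|\xi|^{\kappa+1}$ turns the forward transform into a genuine contraction with factor $\lambda_3/\lambda_2^{\kappa+1}<1$; the latter is the route of \cite{CFL} and is also the one that yields Addendum~\ref{add_cfl}.
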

\begin{add}[\cite{CFL}, Theorem 2.2]
\label{add_cfl}
Moreover, if $f$ is analytic then the local manifolds $V^{wu}_f(x)$, $x\in\T^3$, are uniformly analytic.
\label{add_slow_manifold}
\end{add}

\subsection{Proof of Addendum~\ref{add_analytic}}
Recall that we assume that $f\colon\T^3\to\T^3$ is a $C^1$ small analytic perturbation of the automorphism $L\colon\T^3\to\T^3$ for which the periodic data obstructions vanish.
\begin{enumerate}
\item[ Step 1.] By the proof of the Main Theorem $W^{wu}_f$ is a smooth foliation. Hence $\{ W_f^{wu, loc}(x), x\in\T^3\}$ is family of uniformly smooth local manifolds. By the uniqueness part of Theorem~\ref{thm_p}, we have
$$
W_f^{wu, loc}(x)=V^{wu}_f, x\in\T^3.
$$
\item[Step 2.] By Addendum~\ref{add_cfl}, $V^{wu}_f(x)$, $x\in\T^3$, are uniformly analytic. Hence $W^{wu}_f$ has analytic leaves.
\item[Step 3.] The foliaitons $W^s_f$ and $W^{uu}_f$ also have analytic leaves (see \eg discussion in~\cite{L4}). Hence, by the analytic version of Lemma~\ref{lemma_density}~(\cite[Lemma~4.3]{L1}) the dynamical densities on $W^s_f$, $W^{wu}_f$ and $W^{uu}_f$ are analytic.
\item[Step 4.] By following the arguments of Lemma~2.4 we obtain that the conjugacy is uniformly analytic along the leaves of $W^s_f$, $W^{wu}_f$ and $W^{uu}_f$.
\item[Step 5.] Finally we apply de la Llave's analytic version of Journ\'e's Lemma~\cite{L4}, first to weak unstable and strong unstable pair of foliations, and then to stable and unstable pair of foliations, and conclude that the conjugacy is analytic.
\end{enumerate}

\begin{remark}
Another argument for establishing analyticity along the full unstable foliation was pointed out to us by one of the anonymous referees. This is a bootstrap argument based on the proof of Theorem 6.1 of~\cite{L1}. One needs to observe that the series representation of the unstable derivative of the conjugacy $h$ converges on a small complex extension, and hence, yields analyticity along the unstable foliation.
\end{remark}


\begin{thebibliography}{texttLL}

\bibitem[An67]{An} D. Anosov, {\it Geodesic Flows on Closed Riemannian Manifolds with Negative Curvature. }Proceedings of Steklov Institute of Mathematics, 1967, 90.

\bibitem[CFdlL]{CFL} X. Cabr\'e, E. Fontich, R. de la Llave, {\it The parameterization method for invariant manifolds. I. Manifolds associated to non-resonant subspaces.} Indiana Univ. Math. J. 52 (2003), no. 2, 283--328. 

\bibitem[CrSh98]{CrSh} C. B. Croke, V. A. Sharafutdinov,  {\it Spectral rigidity of a compact negatively curved manifold.} Topology 37 (1998), no. 6, 1265--1273.

\bibitem[FGO15]{FGO} F.T. Farrell, A. Gogolev, P. Ontaneda, {\it Exotic topology in geometry and dynamics.} Handbook of group actions. Vol. II, 3Ð22, Adv. Lect. Math. (ALM), 32, Int. Press, Somerville, MA, 2015.

\bibitem[G08]{G} A. Gogolev,
       {\em Smooth conjugacy of Anosov diffeomorphisms
       on higher dimensional tori.} Journal of Modern Dynamics, 2, no. 4,
       645-700. (2008)

\bibitem[GG08]{GG} A. Gogolev, M. Guysinsky, {\it $C^1$-differentiable conjugacy of Anosov diffeomorphisms on three dimensional torus.}  Discrete Contin. Dyn. Syst. 22 (2008), no. 1-2, 183--200.

\bibitem[GKS11]{GKS} A. Gogolev, B. Kalinin, V. Sadovskaya, {\it Local rigidity for Anosov automorphisms. With an appendix by Rafael de la Llave.} Math. Res. Lett. 18 (2011), no. 5, 843--858.

\bibitem[GK80]{GK} V. Guillemin, D. Kazhdan, {\it Some inverse spectral results for negatively curved 2-manifolds. }Topology, 1980, 19, 301--312.

\bibitem[Ham99]{Ham} U. Hamenst\"adt, {\it Cocycles, symplectic structures and intersection.} Geom. Funct. Anal. 9 (1999), no. 1, 90--140.

\bibitem[H79]{Her} M. Herman, {\it Sur la conjugaison differentiable des diffeomorphismes du cercle a des rotations. } Inst. Hautes Etudes Sci. Publ. Math. No. 49 (1979), 5--233. 

\bibitem[HPS77]{HPS} M. Hirsch, C. Pugh, M. Shub,
        {\em Invariant manifolds.}
        Lecture Notes in Math., 583, Springer-Verlag, (1977).

\bibitem[J88]{J}   J.-L. Journ\'e,
          {\em A regularity lemma for functions of several variables.}
          Revista Matem\'atica Iberoamericana {4} (1988),
          no. 2, 187-193.

\bibitem[JPdlL95]{JPL} M. Jiang, Ya. Pesin, R. de la Llave, {\it On the integrability of intermediate distributions for Anosov diffeomorphisms.} Ergodic Theory Dynam. Systems 15 (1995), no. 2, 317--331.

%\bibitem[KS03]{KS1}  B. Kalinin, V. Sadovskaya.
  %        {\em On local and global rigidity of quasiconformal Anosov
    %      diffeomorphisms.}  Journal of the Institute of Mathematics
      %    of Jussieu, 2 (2003), no. 4, 567-582.

\bibitem[KS07]{KS2} B. Kalinin, V. Sadovskaya,
          {\em On classification of resonance-free Anosov $\Z^k$ actions.}
          Michigan Mathematical Journal, {55} (2007), no. 3, 651-670.

\bibitem[KS09]{KS3} B. Kalinin, V. Sadovskaya,
            {\em On Anosov diffeomorphisms with asymptotically conformal
            periodic data.}  Ergodic Theory Dynam. Systems, 29 (2009), 117-136.

\bibitem[KS10]{KS4} B. Kalinin, V. Sadovskaya,
          {\em Linear cocycles over hyperbolic systems and criteria of conformality.}
          Journal of Modern Dynamics, vol. 4 (2010), no. 3, 419-441.

%\bibitem[KH95]{KH}  A. Katok, B. Hasselblatt.
   %           {\em Introduction to the modern theory of dynamical systems.}
      %        Encyclopedia of Mathematics and   its Applications,
         %     vol. 54. Cambridge University Press, London-New York, 1995.


\bibitem[dlL87]{L0}  R. de la Llave,
              {\em Invariants for smooth conjugacy of hyperbolic
              dynamical systems II.} Commun. Math. Phys., {109} (1987),
              368-378.

\bibitem[dlL92]{L1}  R. de la Llave,
             {\em Smooth conjugacy and SRB measures for uniformly
              and non-uniformly hyperbolic systems.} Comm. Math. Phys.
              {\bf 150} (1992), 289-320.

%\bibitem[dlL02]{L2}  R. de la Llave.
   %         {\em Rigidity of higher-dimensional conformal Anosov systems.}
      %      Ergodic Theory Dynam. Systems 22 (2002), no. 6, 1845--1870.

      
 \bibitem[dlL97]{L4} R. de la Llave, {\it Analytic regularity of solutions of Livsic's cohomology equation and some applications to analytic conjugacy of hyperbolic dynamical systems.} Ergodic Theory Dynam. Systems 17 (1997), no. 3, 649--662.

      
\bibitem[dlL04]{L3}  R. de la Llave,
              {\em Further rigidity properties of conformal Anosov systems.}
              Ergodic Theory Dynam. Systems, 24 (2004), no. 5, 1425--1441.
              
\bibitem[LMM86]{LMM} R. de la Llave, J. M. Marco, R. Moriy\'on, {\it Canonical perturbation theory of Anosov systems and regularity results for the Livsic cohomology equation.} Ann. of Math. (2) 123 (1986), no. 3, 537--611.

\bibitem[MM87]{MM} J. M. Marco, R. Moriy\'on, {\it
Invariants for smooth conjugacy of hyperbolic dynamical systems. I. }
Comm. Math. Phys. 109 (1987), no. 4, 681--689.
              
%\bibitem[dlLM88]{LM}  R. de la Llave, R. Moriy\'on.
   %           {\em Invariants for smooth conjugacy
      %        of hyperbolic dynamical systems IV.} Commun. Math. Phys.,
         %     {116} (1988), 185-192.de la Llave, Rafael(1-TX); Wayne, C. Eugene(1-PAS)
\bibitem[dlLW]{LW} R. de la Llave, C. E. Wayne, {\it On Irwin's proof of the pseudostable manifold theorem.}
Math. Z. 219 (1995), no. 2, 301--321.
         
\bibitem[P73]{P} Ya. Pesin, {\it On the existence of invariant fiberings for a diffeomorphism of a smooth manifold, } Mat. Sb. 91, no. 2, (1973), 202-210.

\bibitem[P04]{Pbook} Ya. Pesin, {\it Lectures on partial hyperbolicity and stable ergodicity.} Zurich Lectures in Advanced Mathematics. European Mathematical Society (EMS), Z\"urich, 2004. vi+122 pp.

\bibitem[R75]{R} H. R\"ussmann, {\it On optimal estimates of the solutions of linear partial differential equations of first order with constant coefficients on the torus,} Lecture Notes in Physics, 38, 1975.

\bibitem[Sch80]{Sch} W. M. Schmidt, {\it Diophantine approximation.} Lecture Notes in Mathematics, 785. Springer, Berlin, 1980. x+299 pp.

\end{thebibliography}
\end{document}